\newtheorem*{rep@theorem}{\rep@title}
\newcommand{\newreptheorem}[2]{%
	\newenvironment{rep#1}[1]{%
		\def\rep@title{#2 \ref{##1}}%
		\begin{rep@theorem}}%
		{\end{rep@theorem}}}
\newtheorem{theorem}{Theorem}%[section]
\newtheorem{lemma}{Lemma}
\newtheorem{prop}{Proposition}
\theoremstyle{definition}
\newtheorem{remark}[theorem]{Remark}
\newtheorem{question}{Question}
\newcommand{\rr}{\mathbb{R}}
\newcommand{\zz}{\mathbb{Z}}
\newcommand{\qq}{\mathbb{Q}}
\newcommand{\ff}{\mathbb{F}}
\DeclareMathOperator{\leanop}{\mathsf{Lean}}
\def\lean{\ensuremath{\leanop}} % for Lean
\title{(In)dependence of the Axioms of $\Lambda$-trees}
\author{Raphael Appenzeller }
\address{Department of Mathematics, ETH Zurich, Switzerland}
\email{raphael.appenzeller@math.ethz.ch}
\date{\today}      % Activate to display a given date or no date
\begin{document}

\def\subjclassname{\textup{2020} Mathematics Subject Classification}
\expandafter\let\csname subjclassname@1991\endcsname=\subjclassname
\subjclass{
51M30, % Line geometries and their generalizations [See also 53A25]
06F20, % Ordered abelian groups, Riesz groups, ordered linear spaces [See also 46A40]
%51E24, % Buildings and the geometry of diagrams, in FINITE Geometry
05C05. % Trees
}
\keywords{$\Lambda$-trees, ordered abelian groups, Euclidean fields, $\Lambda$-buildings. }
\date{\today}

\begin{abstract}
A $\Lambda$-tree is a $\Lambda$-metric space satisfying three axioms (1), (2) and (3). We give a characterization of those ordered abelian groups $\Lambda$ for which axioms (1) and (2) imply axiom (3). As a special case, it follows that for the important class of ordered abelian groups $\Lambda$ that satisfy $\Lambda=2\Lambda$, (3) follows from (1) and (2). For some ordered abelian groups $\Lambda$, we show that axiom (2) is independent of the axioms (1) and (3) and ask whether this holds for all ordered abelian groups. Part of this work has been formalized in the proof assistant \lean.
\end{abstract}

\maketitle

\section{Introduction}

Let $(\Lambda,+)$ be an ordered abelian group. A set $X$ together with  a $\Lambda$-valued function
$
    d \colon X \times X \to \Lambda 
$
that is positive definite, symmetric and satisfies the triangle inequality is called a \emph{$\Lambda$-metric space}. A $\Lambda$-metric space $(X,d)$ is a \emph{$\Lambda$-tree} if it satisfies the following three axioms (definitions in Section \ref{sec:def}):
\begin{itemize}
    \item[(1)] $(X,d)$ is geodesic: any two points can be joined by a segment.
    \item[(2)] If two segments $s,s'\subseteq X$ intersect in a single point $s\cap s' = \{p\}$, which is an endpoint of $s$ and $s'$, then their union $s\cup s'$ is a segment.
    \item[(3)] If two segments $s,s'\subseteq X$ have an endpoint in common, then their intersection $s\cap s'$ is a segment. 
\end{itemize}

 It is known that for $\Lambda = \zz$ and $\Lambda=\rr$, axiom (3) follows from (1) and (2), see \cite[Lemma I.1.1]{ms84} or \cite[Lemma I.2.3]{ch01}. As $\Lambda$-trees generalize trees from graph theory ($\Lambda = \zz$) and real trees ($\Lambda = \rr$) the following question arises. For which ordered abelian groups $\Lambda$ is the addition of axiom (3) necessary?
In Theorem \ref{thm:main}, we give a complete characterization of the groups $\Lambda$, for which axiom (3) follows from (1) and (2), answering the question. 

\begin{theorem}\label{thm:main}
	Let $\Lambda$ be an ordered abelian group. The following are equivalent:
	\begin{itemize}
		\item [(a)] For every positive $\lambda_0 \in \Lambda$, the set
		$
		\{t \in \Lambda \colon 0 \leq 2t\leq \lambda_0 \}
		$
		has a maximum.
		\item [(b)] Every $\Lambda$-metric space that satisfies axioms (1) and (2) also satisfies (3).
	\end{itemize}
\end{theorem}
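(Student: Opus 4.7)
The plan is to prove both implications separately.

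For $(b) \Rightarrow (a)$ I argue the contrapositive. Suppose (a) fails at some $\lambda_0 > 0$, so $S := \{t \in \Lambda : 0 \leq 2t \leq \lambda_0\}$ has no maximum (in particular $\lambda_0/2 \notin \Lambda$). I construct a $\Lambda$-metric space $X$ modeled on a ``$Y$-tree whose branching point is missing'': set $T := \{t \in \Lambda : 2t > \lambda_0,\ t \leq \lambda_0\}$, take $X := S \sqcup (T \times \{1,2\})$, and equip it with the metric that agrees with $\Lambda$ on $S$ and within each branch $T \times \{i\}$, sets $d(t,(u,i)) := u - t$ for $t \in S$, and sets $d((t,1),(u,2)) := t + u - \lambda_0$ across the branches. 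The two natural segments $s_i$ from $0$ to $(\lambda_0, i)$ then satisfy $s_1 \cap s_2 = S$, which has no maximum and so is not a segment, violating (3). I then verify (1) and (2) by a direct case analysis: (1) follows from an explicit parameterization of the geodesic between the two branches, which ``jumps across'' the missing midpoint, and (2) holds because away from the missing point the space is locally a tree.

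For $(a) \Rightarrow (b)$, assume (a) and let $(X, d)$ satisfy (1) and (2). Let $s : [0, \alpha] \to X$ and $s' : [0, \alpha'] \to X$ be segments with $s(0) = s'(0) = p$; write $q := s(\alpha)$, $q' := s'(\alpha')$, and $\lambda_0 := \alpha + \alpha' - d(q, q') \geq 0$. By (a), the element $t^* := \max\{t : 2t \leq \lambda_0\}$ of $\Lambda$ exists. Set $A := \{t : s(t) = s'(t)\}$, so $s \cap s' = s(A)$; the plan is to show $A = [0, t^*]$, whence $s \cap s'$ is a segment of length $t^*$. The inclusion $A \subseteq [0, t^*]$ is immediate: for $t \in A$ the triangle inequality gives $d(q,q') \leq (\alpha - t) + (\alpha' - t)$, hence $2t \leq \lambda_0$.

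The reverse inclusion $[0, t^*] \subseteq A$ is the heart of the matter, and I expect it to be the main obstacle. My approach rests on the following ``non-triviality'' consequence of (2): if $\bar s, \bar s'$ are two segments sharing a starting point but their far endpoints $\bar q, \bar q'$ satisfy $d(\bar q, \bar q') < |\bar s| + |\bar s'|$, then they must share additional points, since otherwise (2) would force $\bar s^{-1} \cup \bar s'$ to be a segment of length $|\bar s| + |\bar s'|$, contradicting the distance between its endpoints. Applied iteratively to $s|_{[u, \alpha]}$ and $s'|_{[u, \alpha']}$ at each $u \in A$ with $2u < \lambda_0$, this produces new common points of $s$ and $s'$ marching upward toward $t^*$. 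Combining this with downward closure of $A$ (proven by a similar application of (2)) and the $2$-Lipschitz property $|d(s(t), s'(t)) - d(s(u), s'(u))| \leq 2|t - u|$, I expect to conclude $A = [0, t^*]$. The essential role of (a) is to guarantee that the iteration converges to an element of $\Lambda$ rather than accumulating at a nonexistent $\lambda_0/2$, which is precisely the failure mode exploited by the contrapositive construction.
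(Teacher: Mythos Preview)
Your construction for $\neg(a)\Rightarrow\neg(b)$ is essentially the paper's: the paper glues three copies of $I=\{t:0\le 2t\le\lambda_0\}$ at the missing midpoint, while you take one stem $S=I$ and two branches. Both are ``trees with the branch point removed'' and the verifications of (1), (2), $\neg$(3) are parallel.

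The gap is in your plan for $(a)\Rightarrow(b)$. You correctly observe that $A=\{t:s(t)=s'(t)\}$ is downward closed (by unique geodesics, which follow from (1)+(2)), that $A\subseteq[0,t^*]$, and that whenever $u\in A$ with $2u<\lambda_0$ axiom (2) forces some $v>u$ into $A$. But this only shows that $A$ has no maximum strictly below $t^*$; it does \emph{not} show $t^*\in A$. Your ``marching upward'' produces an increasing family in $A$, and you assert that condition (a) makes it converge. It does not: condition (a) guarantees maxima only for sets of the special form $\{t:2t\le\mu\}$, not for arbitrary bounded downward-closed sets. For instance $\Lambda=\rr\times\rr$ with the lexicographic order satisfies (a) (since $\Lambda=2\Lambda$), yet the set $\{(0,n):n\in\nn\}$ is bounded above by $(1,0)$ and has no supremum. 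Nothing in your outline prevents $A$ from being exactly such a set, and the $2$-Lipschitz estimate $d(s(t^*),s'(t^*))\le 2(t^*-u)$ for $u\in A$ does not force $d(s(t^*),s'(t^*))=0$ without a supremum to plug in.

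The paper avoids iteration entirely. It first truncates so that both segments have the same length $a$ (replacing $z$ by $\tilde z=\psi(a)$), then applies (a) to $d(y,\tilde z)$ rather than to your $\lambda_0$, obtaining $r=\max\{t:2t\le d(y,\tilde z)\}$ and the near-midpoint $p=\sigma(r)$ of the crossbar $[y,\tilde z]$. Setting $\ell=d(y,\tilde z)-r$, it names explicit candidate branch points $y'=\varphi(a-\ell)$ and $z'=\psi(a-\ell)$ and proves $y'=z'$ by a four-case analysis based on a separate lemma (for $w$ on a segment $[x,y]$ and any $p$, at least one of $[x,w]\cap[w,p]$, $[y,w]\cap[w,p]$ equals $\{w\}$). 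Each case reduces, via (2) and uniqueness, to an equality of distances that pins down $y'=z'=p$. The point is that the branch point is \emph{constructed} in one step from the crossbar midpoint, so no limiting process is needed.
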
 

For $\Lambda = \zz$ and $\Lambda = \rr$, we recover the statement of \cite[Lemma I.1.1]{ms84}, as every bounded subset of $\zz$ and $\rr$ has a supremum. Condition (a) can also be satisfied if every element in $\Lambda$ is divisible by $2$, showing that axiom (3) follows from (1) and (2) for groups like $\qq,\, \zz[\frac{1}{2}]$ or $\rr \times \rr$ with the lexicographical ordering. On the other hand, Theorem \ref{thm:main} also shows that in general, (for example for $\Lambda = \mathbb{Z}[\frac{1}{3}]$ or $\Lambda = \mathbb{Z}[\sqrt{2}]$), axiom (3) is independent of (1) and (2).

When $\Lambda$-trees were introduced in \cite{ms84}, $X$ was required to be uniquely geodesic as part of the definition of $\Lambda$-trees. The uniqueness however already follows from the conditions (1) and (2), see \cite[Lemma I.3.6]{ch01}. We show in Proposition \ref{prop:1_3_to_unique_1} that uniqueness also follows from the conditions (1) and (3).
\begin{prop}\label{prop:1_3_to_unique_1}
	Let $(X,d)$ be a geodesic $\Lambda$-metric space. If $(X,d)$ satisfies axiom (3), then $(X,d)$ is uniquely geodesic.
\end{prop}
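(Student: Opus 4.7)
The plan is to take two segments $s,s'$ from $x$ to $y$ and deduce $s=s'$ by applying axiom (3) at the common endpoint $x$ and then exploiting the uniqueness of points at a given distance along a segment. Since $s$ and $s'$ both have $x$ as an endpoint, axiom (3) immediately yields that $s\cap s'$ is itself a segment.

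First I would pin down the length and endpoints of the segment $s\cap s'$. Writing $\ell := d(x,y)$, both $s$ and $s'$ are segments of length $\ell$. Since $s\cap s' \subseteq s$, every pair of its points lies at distance at most $\ell$, so the length of $s\cap s'$ is at most $\ell$. On the other hand, $x$ and $y$ both belong to $s\cap s'$ with $d(x,y)=\ell$, forcing the length to be exactly $\ell$. The diameter of a segment is attained only between its two endpoints, so $x$ and $y$ must in fact be the endpoints of $s\cap s'$.

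Next I would invoke the standard fact that a segment from $x$ to $y$ of length $\ell$ is an isometric image of the $\Lambda$-interval $\{t \in \Lambda : 0 \leq t \leq \ell\}$, and therefore contains exactly one point at each distance $t$ from $x$ (the isometric parametrization is injective). Applying this uniqueness inside each of $s$, $s'$, and $s\cap s'$, for every admissible $t$ the unique point of $s$ at distance $t$ from $x$ must coincide with the unique point of $s\cap s'$, and likewise with the unique point of $s'$. Hence $s \subseteq s\cap s' \subseteq s'$, and by symmetry $s = s\cap s' = s'$.

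The only real subtlety I anticipate is this identification of the endpoints and length of $s\cap s'$ as a segment, since axiom (3) only guarantees that it is \emph{some} segment, not which one; the length comparison through $s$ combined with diameter-realization at $x,y$ is what forces the right parametrization. Once this step is handled, the rest is a routine consequence of the isometric parametrization of segments.
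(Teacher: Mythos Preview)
Your proposal is correct and follows essentially the same route as the paper: apply axiom (3) to obtain that $s\cap s'$ is a segment, observe that it contains both endpoints at the full distance $\ell=d(x,y)$, and then use the uniqueness of points at a prescribed distance from $x$ along a segment to conclude $s\subseteq s\cap s'$ (and symmetrically for $s'$). The only cosmetic difference is that the paper skips the diameter/endpoint identification by directly parametrizing the subsegment of $s\cap s'$ from $x$ to $y$, whereas you argue explicitly that $x,y$ are the endpoints of $s\cap s'$; both variants arrive at the same inclusion argument.
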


We show further that for many ordered abelian groups $\Lambda$, axiom (2) does not follow from axioms (1) and (3).

\begin{theorem}\label{thm:sec}
	Let $\Lambda \neq 2 \Lambda$ or let $\Lambda$ be the additive group of an ordered field. Then axiom (2) is independent of the axioms (1) and (3).
\end{theorem}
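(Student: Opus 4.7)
The plan is to exhibit, for each $\Lambda$ in the hypothesis, a $\Lambda$-metric space satisfying (1) and (3) but failing (2). By Proposition \ref{prop:1_3_to_unique_1}, any such space is automatically uniquely geodesic, so what I am really after is a uniquely geodesic $\Lambda$-metric space containing three points $a,p,b$ with $[a,p] \cap [b,p] = \{p\}$ and $d(a,b) < d(a,p) + d(p,b)$, since in any uniquely geodesic space with (3) the equality $d(a,b) = d(a,p) + d(p,b)$ already forces $[a,p] \cup [p,b] = [a,b]$.

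For the case $\Lambda \neq 2\Lambda$, I would pick $\lambda_0 \in \Lambda_{>0}$ with $\lambda_0/2 \notin \Lambda$ and take $X$ to be the \emph{equilateral $\lambda_0$-triangle}: three disjoint copies of $[0,\lambda_0]_\Lambda$ glued cyclically at their endpoints to form vertices $V_1,V_2,V_3$, equipped with the path metric. The two routes between a pair of points on distinct edges coincide in length precisely when certain $\Lambda$-elements add up to $3\lambda_0/2$, which is impossible because $\lambda_0/2 \notin \Lambda$; hence geodesics are unique. Axiom (3) then follows: by uniqueness, two segments from a common point must agree on an initial subsegment which is their intersection. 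Axiom (2) fails on the pair of full edges meeting at $V_1$, because the union puts $V_2$ and $V_3$ at distance $\lambda_0$ via the third edge, rather than $2\lambda_0$, and therefore cannot be a segment.

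When $\Lambda$ is an ordered field, one has $\Lambda = 2\Lambda$ and the triangle construction breaks down, since edges have midpoints and the ``antipodal'' pairs admit two distinct geodesics. Instead I would use a $2$-dimensional construction. For $\Lambda = \rr$ the Euclidean plane $(\rr^2,\ell^2)$ does the job: strict convexity of $\ell^2$ gives unique geodesics (and hence (3), since two segments from a point agree on a collinear subsegment and then diverge), while (2) fails on any non-collinear triple $a,p,b$ whose two line segments $[a,p]$ and $[p,b]$ meet only at $p$ but whose union is a bent $V$ rather than the straight segment $[a,b]$. For general ordered $\Lambda$, the plan is to carry out the analogous picture in a $\Lambda$-valued strictly convex $2$-dimensional model --- for instance, within an apartment of a suitable $\Lambda$-building, or by embedding $\Lambda$ into its real closure $\bar\Lambda$ and carving out a geodesically closed $\Lambda$-subspace of $\bar\Lambda^2$ on which all $\ell^2$-distances land in $\Lambda$.

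The hard part will be the ordered-field case when $\Lambda$ is not Pythagorean: $\ell^2$ on $\Lambda^2$ is then not $\Lambda$-valued, while the $\Lambda$-valued polyhedral norms one might hope to use (like $\ell^1$ or $\ell^\infty$) are not strictly convex and so admit multiple geodesics, breaking (3). The technical heart of the theorem therefore lies in producing a strictly convex, $\Lambda$-valued $2$-dimensional model; once that is available, the verification of (1), (3), and the failure of (2) proceeds in complete analogy with the $\rr^2$ argument sketched above.
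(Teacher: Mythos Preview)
Your approach to the case $\Lambda \neq 2\Lambda$ is essentially the paper's: your equilateral $\lambda_0$-triangle is, as a metric space, the circle $[0,3\lambda_0]/(0\sim 3\lambda_0)$ with the path metric, which is exactly the paper's example $X_3$, and your uniqueness argument (no $c\in\Lambda$ with $2c=3\lambda_0$, since $3\lambda_0=2c$ would give $\lambda_0=2(c-\lambda_0)$) is the right one.

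For the ordered-field case, however, your proposal stops precisely where the content lies. You correctly diagnose the obstruction --- $\ell^2$ on $\Lambda^2$ is not $\Lambda$-valued unless $\Lambda$ is Pythagorean, while the $\Lambda$-valued polyhedral norms $\ell^1,\ell^\infty$ are not uniquely geodesic --- but you do not actually produce the required model; you only gesture toward apartments in $\Lambda$-buildings or $\Lambda$-subspaces of $\bar\Lambda^2$, neither of which is worked out and neither of which obviously succeeds. As written, this part is a statement of the problem rather than a solution.

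The paper's construction sidesteps the strict-convexity/Pythagoreanity dichotomy you have set up. On $X_2 = (0,2]_\Lambda \times [0,1]_\Lambda$ it takes the \emph{warped} $\ell^1$-type metric
\[
d\bigl((t_1,\varphi_1),(t_2,\varphi_2)\bigr) \;=\; |t_2-t_1| \;+\; \min\{t_1,t_2\}\cdot|\varphi_2-\varphi_1|,
\]
thought of as a polar-coordinate taxicab distance. This is manifestly $\Lambda$-valued (only ring operations appear in the metric), and it is uniquely geodesic because the weight $\min\{t_1,t_2\}$ on the angular coordinate makes it strictly cheaper to perform all angular motion at the smaller radius, pinning down the geodesic. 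Division enters only when parametrising the segments, which is exactly where the field hypothesis is used. The verification of (3) and the failure of (2) are then routine. So the missing idea is not a strictly convex norm at all, but an asymmetric weighting of an $\ell^1$-style metric that forces a canonical order on the two coordinate moves.
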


Theorem \ref{thm:sec}, does not cover all the cases: the group $\Lambda_2 := \zz[\frac{1}{2}]$ for instance does not satisfy the conditions of Theorem \ref{thm:sec}. It is not even clear whether every uniquely geodesic $\Lambda_2$-metric space is a $\Lambda_2$-tree. We would therefore like to propose the following two questions. In view of Proposition \ref{prop:1_3_to_unique_1}, a positive answer to the first question would imply a positive answer to the second.

\begin{question}\label{q:2_independent}
	Let $\Lambda$ be a non-trivial ordered abelian group. Is axiom (2) independent of the axioms (1) and (3)?
\end{question}

\begin{question}\label{q:unique_independent}
	Let $\Lambda$ be a non-trivial ordered abelian group. Is there a uniquely geodesic $\Lambda$-metric space that does not satisfy axiom (2)?
\end{question}

In applications, the algebraic condition $\Lambda = 2 \Lambda$ is often satisfied. An important source of ordered abelian groups is as the image of a valuation $v \colon \ff_{>0} \to \Lambda$ of an ordered field $\ff$. If $\ff$ is \emph{Euclidean}, meaning that all positive elements have square roots in the field, then $\Lambda=2\Lambda$, because $v$ turns multiplication into addition. In particular, all real closed fields are Euclidean. For example, Brumfiel \cite{br88} starts with a non-Archimedean Euclidean field $\ff$ with a valuation $v \colon \ff_{>0} \to{ \Lambda \subseteq \rr}$ and obtains a $\Lambda$-metric space $X$ as a quotient of a non-standard hyperbolic plane over $\ff$. By Theorem \ref{thm:main}, it suffices to check axioms (1) and (2) when proving that the obtained $\Lambda$-metric space is a $\Lambda$-tree. 

{\bf Formaliziation in the proof assistant \lean.} 
The \lean\ theorem prover is a proof assistant developped mainly by Leonardo de Moura at Microsoft Research \cite{mkadr15}. There is an extensive community-built mathematical library \texttt{mathlib} \cite{mC19}, which by now contains a large part of undergraduate mathematics and also some more advanced projects such as a formalization of perfectoid spaces \cite{bcm20}. We build on the definition of ordered abelian groups already present in \texttt{mathlib} to formalize parts of this paper. We first formalize the notions of $\Lambda$-metric spaces and $\Lambda$-trees. We show some basic properties such as reparametrizations of segment maps. We give complete formal proofs for Lemma \ref{lem:1_2_to_unique_1}, Lemma \ref{lem:lem} and the direction (a) implies (b) of Theorem \ref{thm:main}. The \lean-files can be found \cite{app23}. We believe that the formalization of proofs can considerably improve the reliability of new mathematical results.%, a substantial time investement is however required.

\newpage

{\bf Relation to $\Lambda$-buildings.} The notion of a $\Lambda$-tree was introduced in \cite{ms84} to generalize the notion of $\rr$-trees which itself is a generalization of trees (or $\zz$-trees) from graph theory. Analogously Bennett \cite{ben90} gave a system of axioms (A1) - (A6) for affine $\Lambda$-buildings, which generalize affine $\rr$-buildings which themselves generalize the classical affine buildings (or $\zz$-buildings). Indeed, the rank $1$ affine $\Lambda$-buildings are exactly the $\Lambda$-trees without leaves. While axiom (3) ensures that the tree-segments intersect where they should, axiom (A6) has a similar role in the theory of affine $\Lambda$-buildings. Just like (3) follows from (1) and (2) in the case $\Lambda=\rr$, (A6) follows from the other axioms for affine $\rr$-buildings, see \cite{par00}. To show that (A6) is independent of (A1) - (A5), Bennett (\cite{ben94}, Remark 3.3) claims to give an example of a $\qq$-metric space that satisfies all axioms of a rank $1$ affine $\qq$-building except for (A6). However, there is a mistake in the example: not all distances are $\qq$-valued. Indeed, Theorem 1 implies, that no rank $1$ counterexample for $\Lambda = \qq$ can exist. However, the example in \cite{ben94} can be modified by taking $\Lambda = \zz[\frac{1}{3}]$ instead of $\qq$ to show that axiom (A6) is independent of the other axioms. The axioms of affine $\Lambda$-buildings have been studied and simplified in \cite{bss}, and it would be interesting to know under which assumptions on $\Lambda$ (A6) follows from the other axioms, thus simplifying the axioms further and possibly generalizing Theorem \ref{thm:main}.

{\bf Structure of the paper.} In Section \ref{sec:def} we state the definitions and some elementary results in the theory of $\Lambda$-trees, such as Proposition \ref{prop:1_3_to_unique_1}. In Section \ref{sec:thm} we prove the implication (a) $\implies$ (b) of Theorem \ref{thm:main}. In Section \ref{sec:examples} we construct counterexamples to show the implication $\neg$(a) $\implies$ $\neg$(b) of Theorem \ref{thm:main} and to show Theorem \ref{thm:sec}.

The author would like to thank L. De Rosa and X. Flamm for insightful discussions.

\section{Definitions and elementary results}\label{sec:def}

We define the notion of a $\Lambda$-tree following Chiswell's book \cite{ch01}, where more details can be found. Let $(\Lambda,+)$ be an ordered abelian group. A set $X$ together with  a $\Lambda$-valued function
$$
d \colon X \times X \to \Lambda 
$$
that is positive definite, symmetric and satisfies the triangle inequality is called a \emph{$\Lambda$-metric space}.
A \emph{closed $\Lambda$-interval} is a set of the form
$$
[a,b]:=  \{\lambda \in \Lambda \colon a \leq \lambda \leq b\}
$$
for $a\leq b\in \Lambda$. Closed $\Lambda$-intervals with $d_\Lambda(t,t')=|t'-t|$ for $t,t'\in [a,b]$ are $\Lambda$-metric spaces. Let $X$ be any $\Lambda$-metric space. An isometric embedding $\varphi \colon [a,b] \to X$ is called a \emph{parametrization} of its image $s=\varphi([a,b])\subseteq X$, which is called a \emph{segment}. Note that the set of \emph{endpoints} $\{\varphi(a), \varphi(b)\}\subseteq X$ is independent of the parametrization $\varphi$ of the segment $s$. A $\Lambda$-metric space is \emph{geodesic} if for any two points $p,q\in X$ there exists a segment that has $p$ and $q$ as endpoints. If there is only one such segment, then $X$ is called \emph{uniquely geodesic} (or \emph{geodesically linear} in \cite{ch01}). In a uniquely geodesic $\Lambda$-metric space, we denote a segment $s$ with endpoints $x,y \in X$ by $s=[x,y]$. %In this case we deonte $s$ by $[p,q]$.

A $\Lambda$-metric space $(X,d)$ is a \emph{$\Lambda$-tree} if it satisfies the following three axioms, two of which are illustrated in Figure \ref{fig:axioms_2_3}.
\begin{itemize}
	\item[(1)] $(X,d)$ is geodesic.
	\item[(2)] If two segments $s,s'\subseteq X$ intersect in a single point $s\cap s' = \{p\}$, which is an endpoint of $s$ and $s'$, then their union $s\cup s'$ is a segment.
	\item[(3)] If two segments $s,s'\subseteq X$ have an endpoint in common, then their intersection $s\cap s'$ is a segment. 
\end{itemize}

\begin{figure}[t]
	\centering
	\includegraphics[scale=0.7]{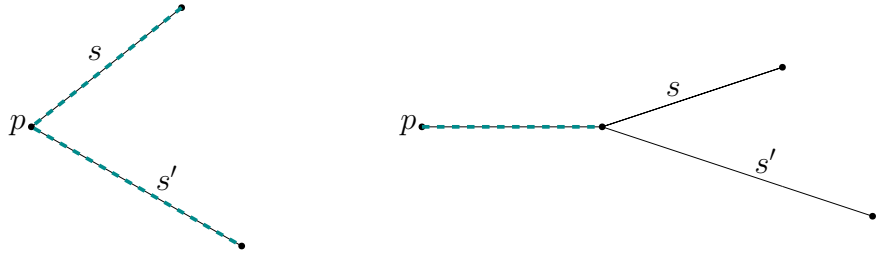}
	\caption{Illustration of axiom (2) (left) and axiom (3) (right) of $\Lambda$-trees. In axiom (2) the segments $s, s'$ are only allowed to intersect in one point.}
	\label{fig:axioms_2_3}
\end{figure}

In the presence of either axiom (2) or axiom (3), geodesic $\Lambda$-metric spaces are uniquely geodesic. For axiom (2), this is Lemma \ref{lem:1_2_to_unique_1} and was shown in \cite[Lemma I.3.6]{ch01}. For axiom (3), this is Proposition \ref{prop:1_3_to_unique_1} and we give a proof here.
 
\begin{lemma}[Lemma I.3.6 in \cite{ch01}]\label{lem:1_2_to_unique_1}
	Let $(X,d)$ be a geodesic $\Lambda$-metric space. If $(X,d)$ satisfies axiom (2), then $(X,d)$ is uniquely geodesic.
\end{lemma}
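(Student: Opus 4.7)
The strategy is by contradiction: assume two distinct segments $s \neq s'$ share the endpoints $p$ and $q$, and deduce an absurdity from axiom (2). Parametrize $s = \varphi([0, L])$ and $s' = \psi([0, L])$ by isometric embeddings with $L = d(p, q)$, $\varphi(0) = \psi(0) = p$, and $\varphi(L) = \psi(L) = q$. A preliminary observation: any isometric embedding $f\colon [0, L] \to [0, L]$ fixing the endpoint $0$ must be the identity, since $|f(t) - 0| = |t - 0|$ forces $f(t) = t$. Consequently, the containment $s \subseteq s'$ as subsets of $X$ would already yield $s = s'$ (via $\psi^{-1}\circ\varphi$). It therefore suffices to take an arbitrary $x = \varphi(t) \in s$ with $t \in (0, L)$ and show $x \in s'$.

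Apply axiom (2) to the sub-segment $\varphi([0, t])$ and the full segment $s'$, which share the endpoint $p$. If their intersection were $\{p\}$, axiom (2) would produce a segment $\varphi([0, t]) \cup s'$ with endpoints $x$ and $q$ and length $t + L$; but $d(x, q) = L - t$, forcing $t = 0$, contradicting $x \neq p$. Hence the intersection contains some $y \neq p$. Since a point of $s \cap s'$ is determined by its distance from $p$ (isometric parametrizations anchored at $p$), we conclude $\varphi(u) = \psi(u) = y$ for some $u$ with $0 < u \leq t$. If $u = t$, then $x = y \in s'$ and we are done. Otherwise $u < t$, and the symmetric argument applied to $\varphi([t, L])$ and $s'$ yields $z = \varphi(v) = \psi(v)$ with $t \leq v < L$; once more, $v = t$ finishes the proof. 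The remaining case produces $y, z \in s \cap s'$ with $u < t < v$, so $\varphi([u, v])$ and $\psi([u, v])$ are two distinct segments from $y$ to $z$ of strictly smaller length $v - u < L$, since the first contains $x$ and the second does not.

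The main obstacle is that this simply reduces the problem to the same picture on a shorter interval, and in a general ordered abelian group $\Lambda$ lengths are not well-founded, so induction on $L$ is unavailable. To close the argument I would extract additional constraints from further applications of axiom (2) --- in particular, to the ``hybrid'' concatenation $\varphi([0, u]) \cup \psi([u, v]) \cup \varphi([v, L])$. Two applications of axiom (2), with the intersection conditions verified using the established fact that sub-segments of $s$ and $s'$ meet $s'$ and $s$ only along their shared parameter values, should make this hybrid into a genuine segment from $p$ to $q$ of length $L$ that does not pass through $x$. Combining the distance equalities coming from this segment with those coming from $s$ itself, the carefully tracked positions of $x, y, z$ then yield an equality of the form $\lambda = 0$ for some strictly positive $\lambda \in \Lambda$, the desired contradiction. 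The delicate bookkeeping of intersection conditions in these combined axiom (2) applications is where the real work lies.
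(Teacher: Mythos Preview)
Your reduction to a strictly shorter pair of segments is correct, and you rightly identify that this alone does not terminate over a general $\Lambda$. The problem is that your proposed escape via the ``hybrid'' segment does not close the gap. Suppose you do succeed in showing that $H = \varphi([0,u]) \cup \psi([u,v]) \cup \varphi([v,L])$ is a segment from $p$ to $q$ (the intersection checks you need are indeed routine, using that a point common to $s$ and $s'$ must have the same parameter in both). What have you gained? You now hold a segment from $p$ to $q$ of length $L$ that avoids $x$ --- but that is precisely what $s'$ already was. No new distance relation appears: every point of $H$ sits at the same distance from $p$, from $q$, and from $x$ as the corresponding point of $s'$ does, so there is no hidden equation ``$\lambda = 0$'' to extract. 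Your final paragraph is, in effect, a restatement of the original hypothesis, and the bookkeeping you defer cannot manufacture a contradiction that is not there.

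The idea you are missing (and which the paper alludes to by saying that Lemma~\ref{lem:lem} already appears inside Chiswell's proof of this result) is to bring in a segment with $x$ itself as an endpoint. Set $x' := \psi(t)$, the point of $s'$ at the same parameter, and let $\sigma$ be any segment from $x$ to $x'$. Exactly the computation in Lemma~\ref{lem:lem} --- which does \emph{not} require uniqueness, only that $\varphi$ and $\sigma$ are isometric embeddings --- shows that $\sigma$ meets at most one of $\varphi([0,t])$, $\varphi([t,L])$ outside $\{x\}$. Say $\varphi([0,t]) \cap \sigma = \{x\}$; then axiom~(2) makes $\varphi([0,t]) \cup \sigma$ a segment with endpoints $p$ and $x'$, hence of length $d(p,x') = t$, while the concatenated length is $t + d(x,x')$. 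Thus $d(x,x') = 0$ and $x = x' \in s'$. The other case is symmetric. The crucial difference from your set-up is that $\sigma$ ends at $x$, so the length identity produced by axiom~(2) isolates $d(x,x')$ directly rather than pushing the problem to a subinterval.
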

%\begin{proof}
%	This is Lemma 3.6 of Chapter 1 in \cite{ch01}, where a proof can be found.
%\end{proof}

\begin{repprop}{prop:1_3_to_unique_1}
	Let $(X,d)$ be a geodesic $\Lambda$-metric space. If $(X,d)$ satisfies axiom (3), then $(X,d)$ is uniquely geodesic.
\end{repprop}
\begin{proof}
	Let us assume that axiom (3) is satisfied and let $s, s'$ be two segments with common endpoints $p$ and $q$. By axiom (3), their intersection $s\cap s'$ is a segment. We note that $p,q \in s \cap s'$. Let $\varphi \colon [0,d(p,q)] \to X$ be a parametriztion of (potentially a subsegment of) $s \cap s'$ with $\varphi(0)=p$ and $\varphi(d(p,q))=q$. For $r \in s$, we have $d(p,r) \leq d(p,q)$ and hence $\varphi(d(p,r)) \in s\cap s' \subseteq s$. Since $r$ is the unique point on $s$ with distance $d(p,r)$ from $p$, we have $\varphi(d(p,r)) = r$ 
	showing $s\subseteq s\cap s'$. Similarly $s' \subseteq s\cap s'$ and hence $s=s\cap s' = s'$.
\end{proof}

 The idea for the following lemma appears also in Chiswell's proof of Lemma \ref{lem:1_2_to_unique_1}. We will use this lemma in the proof of Theorem \ref{thm:main}.

\begin{lemma} \label{lem:lem}
	Let $(X,d)$ be a uniquely geodesic $\Lambda$-metric space. Let $s=[x,y]$ be a segment, $z \in s$ and $p\in X$. Then
	$$
	[x,z] \cap [z,p] = \{z\} \quad \text{ or } \quad [y,z] \cap [z,p] = \{z\}.
	$$
\end{lemma}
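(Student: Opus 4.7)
The plan is to argue by contradiction. Suppose that both intersections are strictly bigger than $\{z\}$; that is, there exist $a \in [x,z]\cap[z,p]$ and $b \in [y,z]\cap[z,p]$ with $a \neq z$ and $b \neq z$. I would then compute $d(a,b)$ in two different ways, one using that $a$ and $b$ lie on opposite sides of $z$ in the segment $s = [x,y]$, and one using that $a$ and $b$ both lie on the segment $[z,p]$, and show that these two computations are incompatible unless $a=z$ or $b=z$.

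First I would observe that unique geodesicity lets me identify subsegments: since $z \in [x,y]$, the subsegments obtained by restricting the parametrization of $[x,y]$ from $x$ to $z$ and from $z$ to $y$ must coincide with $[x,z]$ and $[z,y]$ respectively. Hence $a$ satisfies $d(x,a)+d(a,z)=d(x,z)$ and $b$ satisfies $d(z,b)+d(b,y)=d(z,y)$, and moreover both $a,b$ lie on $[x,y]$ with $a$ strictly on the $x$-side of $z$ and $b$ strictly on the $y$-side. From the isometric parametrization of $[x,y]$ it follows that
\[
d(a,b) = d(a,z) + d(z,b).
\]

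Next, since $a, b \in [z,p]$, I parametrize $[z,p]$ isometrically starting at $z$; then $d(a,b) = |d(z,a)-d(z,b)|$. Without loss of generality assume $d(z,a) \leq d(z,b)$ (otherwise swap the roles of the two intersections, and hence of $a$ and $b$, by symmetry). Then $d(a,b) = d(z,b) - d(z,a)$. Combining with the previous formula yields
\[
d(z,a) + d(z,b) = d(z,b) - d(z,a),
\]
so $2\,d(z,a) = 0$. Since $\Lambda$ is an ordered abelian group this forces $d(z,a)=0$, contradicting $a \neq z$.

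I do not expect any serious obstacle: the only subtlety is making sure that subsegments of a segment are recognised as the intended segments in a uniquely geodesic space, and that one may apply isometric reparametrization of $[z,p]$ to reduce to the trivial distance formula on a $\Lambda$-interval. Both facts are essentially immediate from the definitions given in Section~\ref{sec:def}.
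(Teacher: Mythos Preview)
Your argument is correct and is essentially the same as the paper's: both compute the distance between a point of $[x,z]\cap[z,p]$ and a point of $[y,z]\cap[z,p]$ in two ways---once inside $[x,y]$ (giving a sum) and once inside $[z,p]$ (giving an absolute difference)---and conclude that one of the two points must equal $z$. The only cosmetic difference is that you phrase it as a proof by contradiction, while the paper proves directly that for arbitrary $x'\in[x,z]\cap[z,p]$ and $y'\in[y,z]\cap[z,p]$ one has $x'=z$ or $y'=z$.
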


\begin{figure}[h]
	\centering
	\includegraphics[scale=1]{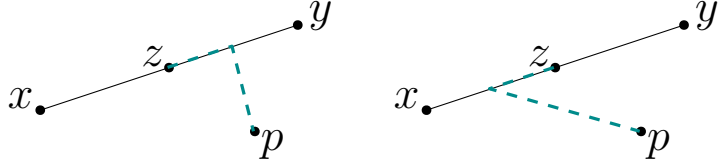}
	\caption{The lemma states that the segment $[z,p]$ cannot intersect both $[x,z]$ and $[y,z]$ outside of $\{z\}$. It is also possible that $[z,p]$ intersects $[x,y]$ in only one point $z$, in which case both possibilities in the lemma are correct.}
	\label{fig:lem}
\end{figure}

\begin{proof}
	For $a:={d(z,p) \in \Lambda}$, let $\varphi \colon [0,a] \to X$ be a parametrization of $[z,p]$ with $\varphi(0) = z$ and $\varphi(a)=p$. Note that $z$ is an endpoint of $[z,p]$. Let $x' \in [x,z]\cap [z,p] $ and $y' \in [y,z] \cap [z,p]$. We would like to show that $x'=z$ or $y'=z$. 
	
	There are $t_x,  t_y \in [0,a]$, such that $\varphi(t_x)=x'$ and $\varphi(t_y)=y'$, and thus $d(x',y') = |t_y-t_x|$. Looking at the segment $[x,y]$, we also know that $z\in [x',y'] \subseteq s$, since $[x',y'] \subseteq [x,y] = [x,z] \cup [z,y]$ is unique. So $d(x',y') = d(x',z)+d(z,y') = t_x + t_y$. The equation $|t_y-t_x| = t_y+t_x$ can only be resolved if $t_x=0$ or $t_y= 0$, so $x'=z$ or $y' = z$. This concludes the proof of the lemma.
\end{proof}

\section{Dependence results} \label{sec:thm} 

Recall that $r \in \Lambda$ is called the \emph{maximum of a subset $S\subseteq \Lambda$} if $r \in S$ and for all $s \in S$, $s \leq r$. We now state the main theorem and prove (a) $\implies$ (b). This proof of this direction is formalized in \lean \ in \cite{app23}. The converse direction is proved in Section \ref{sec:examples}.

\begin{reptheorem}{thm:main}
	Let $\Lambda$ be an ordered abelian group. The following are equivalent:
	\begin{itemize}
		\item [(a)] For every positive $\lambda_0 \in \Lambda$, the set
		$
		\{t \in \Lambda \colon 0 \leq 2t \leq \lambda_0 \}
		$
		has a maximum.
		\item [(b)] Every $\Lambda$-metric space that satisfies axioms (1) and (2) also satisfies (3).
	\end{itemize}
\end{reptheorem} 

\begin{proof}
	
We will show (a) implies (b). So let $\Lambda$ be an ordered abelian group such that for every positive $\lambda_0 \in \Lambda$ the set $[0,\lambda_0/2] := \{\lambda \in \Lambda \colon 0 \leq 2\lambda \leq \lambda_0 \}$ has a maximum and let $(X,d)$ be a $\Lambda$-metric space which is (1) geodesic and (2) whenever two segments intersect in a single common endpoint, then their union is a segment. Let $s, s'$ be two segments with a common endpoint $x\in X$. We want to prove that (3) the intersection $s\cap s'$ is a segment. 

By Lemma \ref{lem:1_2_to_unique_1}, $X$ is uniquely geodesic. Let $y$ be the other endpoint of $s$ and let $z$ be the other endpoint of $s'$, i.e. $s=[x,y]$ and $s' = [x,z]$. For $a:=d(x,y)$ and $b:=d(x,z)$, we have parametrizations
\begin{align*}
    \varphi \colon [0,a] &\to X, \quad\quad \varphi(0) = x , \quad \varphi(a)=y \\
    \psi \colon [0,b] &\to X, \quad\quad  \psi(0) = x, \quad \psi(b) = z
\end{align*}
of the segments $s$ and $s'$. Without loss of generality, let $a\leq b$. We consider $\tilde{z}=\psi (a) \in [x,z]$. By (1) there is a segment $[y, \tilde{z}]$ with parametrization
\begin{align*}
	\sigma \colon [0,d(y,\tilde{z})] &\to X,  \quad \quad \sigma(0) = y , \quad \sigma(d(y,\tilde{z}))=\tilde{z}.
\end{align*}
We consider the subset $S = \{\lambda \in \Lambda \colon 0 \leq 2\lambda \leq d(y,\tilde{z})\}$ which by assumption (a) has a maximum $r\in \Lambda$. We define $\ell = d(y,\tilde{z}) - r$ and note that $ \ell \geq r$, since $2r \leq d(y,\tilde{z})$. Next, we show that $\ell \leq a$ : if $d(y,\tilde{z}) \leq a$, then $\ell \leq d(y,\tilde{z}) \leq a$ is clear, otherwise we have $d(y,\tilde{z})-a \geq 0$, and by the triangle inequality
$$
2(d(y,\tilde{z})-a) = d(y,\tilde{z}) + d(y,\tilde{z}) - 2a \leq 2a + d(y,\tilde{z}) -2a = d(y,\tilde{z})
$$
and hence $d(y,\tilde{z})-a \in S$, which implies $d(y,\tilde{z}) - a \leq r$, i.e. $\ell \leq a$. 

We can now define four points $p := \sigma(r) , q := \sigma(\ell), y':= \varphi(a-\ell)$ and $z' := \psi(a-\ell)$, such that $d(y,y') =d(y,q) = \ell = d(\tilde{z},p) = d(\tilde{z},z')$. The situation is illustrated in Figure \ref{fig:setting}. 

\begin{figure}[h]
	\centering
	\includegraphics[scale=0.9]{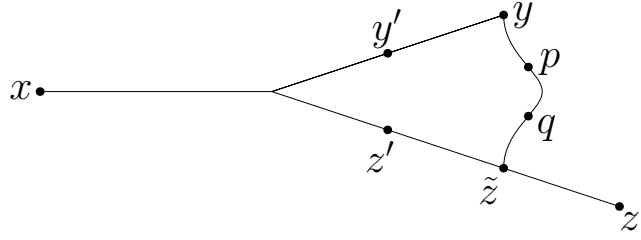}
	\caption{Given the segments $s= [x,y], s' = [x,z] \subseteq X$, we can construct points $\tilde{z}, p, q, y'$ and $z'$ with $d(y,y')=d(y,q)=d(\tilde{z},z')=d(\tilde{z},p) = \ell$. The idea of the proof is to show that $p=y'=z'$, and $s \cap s' =[x,p]$ and thus a segment. }
	\label{fig:setting}
\end{figure}

Our goal is to show that $y' = z'$ and to then conclude that $s\cap s' = [x,y']$ is a segment. We apply Lemma \ref{lem:lem} to the segments $s=[x,y]$ (and $\tilde{s}'=[x,\tilde{z}]$), $y' \in s$ (and $z' \in \tilde{s}'$) and $p$ to create the following case distinction, illustrated in Figure \ref{fig:cases}. In all cases we prove that $y'=z'$.

\begin{figure}[h]
\centering
\includegraphics[scale=1.1]{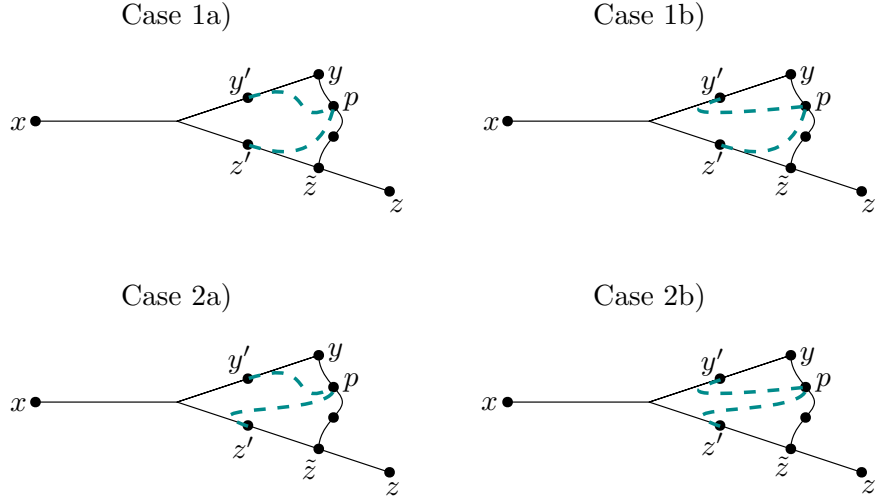}
\caption{We apply Lemma \ref{lem:lem} to two segments. The two possibilities in the lemma lead to four cases we have to consider.}
\label{fig:cases}
\end{figure}

\textbf{Case 1a)}: $[x,y']\cap[y',p] = \{y'\}$ and $[x,z']\cap [z',p] = \{z'\}$.

In this case we have by axiom (2) that $[x,y'] \cup [y',p]$ and $[x,z']\cup[z',p]$ are segments from $x$ to $p$. Since the segment $[x,p]$ is unique, and $d(x,y')=a-\ell=d(x,z')$, it follows that $y'=z'$. 

\textbf{Case 1b)}: $[y,y']\cap[y',p] = \{y'\}$ and $[x,z']\cap [z',p] = \{z'\}$.

In this case, we have by axiom (2) that $[y,y']\cup [y',p] = [y,p]$ is a segment. Since $r \leq \ell = d(y,y') \leq d(y,y') + d(y',p) = d(y,p) = r$, we have in this case $\ell = r$ and $d(y',p)=0$, hence $y'=p$. Now we apply (2) again to see that $[x,z']\cup [z',p] = [x,p] = [x,y']$ is a segment. By uniqueness it is a subsegment of $s=[x,y]$, and since $d(x,z') = a-\ell = d(x,y')$ we have that $z' = y' = p$. 

\textbf{Case 2a)}: $[x,y']\cap[y',p] = \{y'\}$ and $[\tilde{z},z']\cap [z',p] = \{z'\}$.

This case is similar to 1b). We have by axiom (2) that $[\tilde{z},z']\cup [z',p] = [\tilde{z},p]$ is a segment. Since $d(\tilde{z},z') \leq d(\tilde{z},z') + d(z',p) = d(\tilde{z},p) =\ell = d(\tilde{z},z')$, we have $d(z',p)=0$ and hence $z'=p$. Now we apply (2) again to see that $[x,y']\cup [y',p] = [x,p] = [x,z']$ is a segment. By uniqueness it is a subsegment of $s=[x,\tilde{z}]$, and since $d(x,y') = a-\ell = d(x,z')$ we have that $y' = z' = p$. 

\textbf{Case 2b)}: $[y,y']\cap[y',p] = \{y'\}$ and $[\tilde{z},z']\cap [z',p] = \{z'\}$.

In this case $y'=p$ follows as in the first part of Case 1b) and $z'=p$ follows as in the first part of Case 2a). 

In all cases we have established that $y' = z'$. By uniqueness, we see that $[x,y']=[x,z'] \subseteq s\cap s'$. It remains to show that $s\cap s' \subseteq [x,y'] = [x,z']$. Let $t,t' \in \Lambda$ such that $\varphi(t)=\psi(t') \in s\cap s'$. By uniqueness of $[x,\varphi(t)]=[x,\psi(t')]$, $t=t'$ has to hold. We use the triangle inequality to get $d(y,\tilde{z}) \leq d(y,\varphi(t))+d(\psi(t),\tilde{z}) = 2(a-t)$, hence
$$
2(d(y,\tilde{z})-(a-t)) \leq 2(a-t) + d(y,\tilde{z})  -2(a-t) = d(y,\tilde{z})
$$
from which we conclude that $d(y,\tilde{z}) -(a-t) \in S = \{\lambda \in \Lambda \colon 0 \leq 2\lambda \leq d(y,\tilde{z})\}$ (or is smaller than $0$), hence $d(y,\tilde{z})-(a-t) \leq r$, so
$\ell = d(y,\tilde{z})-r \leq a-t$ and $t \leq a-\ell = d(y,y') = d(\tilde{z},z')$. Thus $\varphi(t) \in [x,y'] = [x,z']$ and $s\cap s' = [x,y']$, proving axiom (3).
\end{proof}

\section{Independence results}\label{sec:examples}
In this section we construct three counterexamples $X_1, X_2, X_3$. Their properties are summarized in the following proposition.
\begin{prop} {\color{white}.}
	\begin{itemize}
		\item[(i)] When condition (a) of Theorem \ref{thm:main} does not hold, there is a $\Lambda$-metric space $X_1$ satisfying axioms (1) and (2), but not axiom (3).
		\item[(ii)] When $\Lambda$ is the additive group of an ordered field, there is a $\Lambda$-metric space $X_2$ satisfying axioms (1) and (3), but not axiom (2).
		\item[(iii)] When $\Lambda \neq 2\Lambda$, there is a $\Lambda$-metric space $X_3$ satisfying axioms (1) and (3), but not axiom (2).
	\end{itemize}
\end{prop}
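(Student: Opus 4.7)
For part (i), I would pick $\lambda_0 > 0$ such that $S := \{t \in \Lambda : 0 \leq 2t \leq \lambda_0\}$ has no maximum, and construct $X_1$ as a ``defective tripod'': take the quotient of $[0,\lambda_0] \sqcup [0,\lambda_0]$ by identifying $(t,1) \sim (t,2)$ for every $t \in S$, and set $d((t,i),(s,i)) = |t-s|$ along each copy, $d((t,1),(s,2)) = |t-s|$ when $\min(t,s) \in S$, and $d((t,1),(s,2)) = t+s-\lambda_0$ when neither $t$ nor $s$ lies in $S$ (which is positive because $2t, 2s > \lambda_0$). This is designed to mimic a tripod whose branch point has unavailable ``height'' $\lambda_0/2$; the triangle inequality is a short case check. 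Axiom (1) is verified by writing explicit piecewise isometric parametrizations for cross-branch segments, which are available because $\lambda_0 \notin 2\Lambda$ means no $r \in \Lambda$ sits at the would-be crossover. Axiom (2) follows from a case analysis: two segments with single-point intersection at a shared endpoint must be ``colinear'' through a point of $S$ (one inward, one outward), and then their union sits inside a single branch as a $\Lambda$-interval. Axiom (3) fails at the two segments $[x, y_1]$ and $[x, y_2]$ with $x = (0,\cdot)$ and $y_i = (\lambda_0, i)$: their intersection is exactly $\{(t, \cdot) : t \in S\}$, isometric to $S$, which has no maximum and hence is not a $\Lambda$-interval. The main obstacle here is the case analysis for axiom (2).

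For part (iii), pick $\mu \in \Lambda \setminus 2\Lambda$ with $\mu > 0$ and let $X_3$ be three copies of $[0,\mu]$ glued endpoint-to-endpoint into a triangle with vertices $v_1, v_2, v_3$, equipped with the shortest-path metric. The only real step is to verify unique geodesics: between any two points of $X_3$ there are at most two competing shortest paths (the two ways around the triangle), and these agree in length only when some equation of the form $2\tau = \mu$ or $2\tau = 3\mu$ has a solution $\tau \in \Lambda$, both of which are ruled out by $\mu \notin 2\Lambda$. So $X_3$ is uniquely geodesic, and axiom (3) follows by the standard common-prefix argument (two segments from a shared endpoint coincide up to some last common point, which yields a subsegment). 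Axiom (2) fails at two adjacent sides $[v_1, v_2]$ and $[v_2, v_3]$: they intersect only at $v_2$, yet their union has length $2\mu$, while the unique segment $[v_1, v_3]$ has length $\mu$.

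For part (ii), when $\Lambda$ is the additive group of an ordered field, $2\Lambda = \Lambda$ and the triangle construction of (iii) collapses because the tie equations are now solvable. My plan is to pass to a two-dimensional construction. When $\Lambda$ is Euclidean, $X_2 := \Lambda^2$ with the $\ell^2$ metric is $\Lambda$-valued and uniquely geodesic, so axiom (3) holds, while two perpendicular segments meeting at a common endpoint witness the failure of axiom (2) (their union has length strictly exceeding the straight $\ell^2$ distance between the outer endpoints). For a general ordered field lacking square roots, the main obstacle is producing a $\Lambda$-valued analogue, which I would attempt by working on a subset of $\Lambda^n$ chosen so that all relevant pairwise $\ell^2$-distances lie in $\Lambda$ (for example, configurations assembled from Pythagorean triples), equipped with the induced length metric and tuned to retain unique geodesics. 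This last sub-case is the most delicate and is where I expect the bulk of the technical work to concentrate.
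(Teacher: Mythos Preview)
Your constructions for (i) and (iii) are essentially the paper's own. The paper builds $X_1$ as three copies of $I=\{t:0\le 2t\le\lambda_0\}$ with cross-distance $\lambda_0-x-y$, which is the same tripod-with-missing-branchpoint as your glued pair of intervals; and $X_3$ is literally the circle $[0,3a]/(0\sim 3a)$ with the short-arc metric for $a\notin 2\Lambda$, i.e.\ your triangle with the vertex markings forgotten. One caution on (iii): the phrase ``axiom (3) follows by the standard common-prefix argument'' is not a valid general principle. Your own $X_1$ is uniquely geodesic yet fails (3) precisely because the common prefix $S$ has no last point. In the circle the conclusion is correct, but only because segments there are already closed arcs, so the intersection of two arcs sharing an endpoint is visibly a closed arc; that concrete observation is what you should invoke, not an abstract prefix argument.

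The genuine gap is part (ii). Your $\ell^2$ plane handles only Euclidean ordered fields, and your fallback (restrict to a Pythagorean-distance subset of $\Lambda^n$ and hope it remains geodesic and uniquely so) is not a construction; there is no evident way to make such a set geodesic at all, let alone uniquely. The paper avoids square roots entirely with a ``polar $\ell^1$'' metric: on $X_2=(0,2]\times[0,1]\subset\Lambda^2$ set
\[
d\bigl((t_1,\varphi_1),(t_2,\varphi_2)\bigr)\;=\;|t_2-t_1|\;+\;\min\{t_1,t_2\}\cdot|\varphi_2-\varphi_1|.
\]
This uses only the ring operations, so it is $\Lambda$-valued for \emph{every} ordered field. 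The unique geodesic from $(t_1,\varphi_1)$ to $(t_2,\varphi_2)$ (with $t_1\le t_2$) is the L-shape that moves circumferentially at radius $t_1$ and then radially; the scaling factor $\min\{t_1,t_2\}$ is exactly what kills the usual $\ell^1$ non-uniqueness. Axiom (3) is then a two-case check, and axiom (2) fails on the pair $[(1,0),(2,0)]$ and $[(1,0),(2,1)]$, which meet only at $(1,0)$ but whose union is not the segment $[(2,0),(2,1)]$. This uniform construction is the idea you are missing.
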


The existence of $X_1$ proves that $\neg$(a) implies $\neg$(b) in Theorem \ref{thm:main}. Together with the results of Section \ref{sec:thm} this concludes the proof of Theorem \ref{thm:main}. The existence of $X_2$ and $X_3$ together imply Theorem \ref{thm:sec}.

\subsection{Construction of $X_1$}

We will construct a $\Lambda$-metric space $X_1$ that satisfies axioms (1) and (2), but not (3), whenever the condition
\begin{itemize}
	\item [(a)] For every positive $\lambda_0 \in \Lambda$, the set
	$
	\{t \in \Lambda \colon 0 \leq 2t \leq \lambda_0 \}
	$
	has a maximum.
\end{itemize}
is not satisfied. So let $\lambda_0 \in \Lambda$ be an element such that the set 
$$
I=\{t \in \Lambda \colon 0 \leq 2t \leq \lambda_0 \}
$$
has no maximum and is not empty. Intuitively, $I$ is an interval which has a minimum $0$, but no maximum. In particular, $\lambda_0$ is not divisible by $2$. We construct a $\Lambda$-metric space $X_1$ as illustrated in Figure \ref{fig:counterexample}. We will sometimes use the fact that $\Lambda$ and $[0,\lambda_0]$ are $\Lambda$-trees without proof.

\begin{figure}[h]
	\centering
	\includegraphics[scale=1]{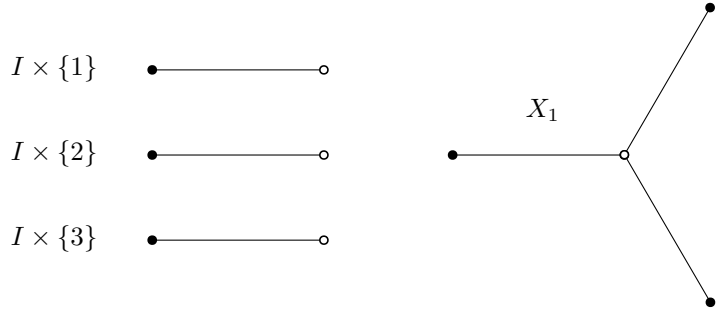}
	\caption{When $\Lambda$ does not satisfy condition (a), we can construct the $\Lambda$-metric space $X_1$ which satisfies (1) and (2), but not (3). The idea is that the branch-point is missing.}
	\label{fig:counterexample}
\end{figure}

\begin{lemma} \label{lem:metric}
The set
$$
X_1 = \bigcup_{i=1}^3 I \times \{i\}
$$
with the distance function
\begin{align*}
	d \colon X_1 \times X_1 & \to \Lambda \\
	((x,i),(y,j)) & \mapsto \left\{\begin{array}{ll}
		|x-y| & \text{if } i = j \\
		\lambda_0 - x - y & \text{else}  \\
	\end{array}\right.
\end{align*}
is a $\Lambda$-metric space. 
\end{lemma}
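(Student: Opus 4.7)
The plan is to verify the three defining properties of a $\Lambda$-metric space (positive definiteness, symmetry and the triangle inequality) for the distance $d$ on $X_1$. Symmetry is immediate from the two defining formulas, since both $|x-y|$ and $\lambda_0 - x - y$ are symmetric in $x$ and $y$.

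For positive definiteness, the same-index case is inherited from the $\Lambda$-metric structure on $\Lambda$. The different-index case requires $\lambda_0 - x - y > 0$ for $x, y \in I$. Nonnegativity follows from $2x \leq \lambda_0$ and $2y \leq \lambda_0$, which give $2(x+y) \leq 2\lambda_0$ and hence $x + y \leq \lambda_0$. Strict inequality is the first place where the hypothesis on $\lambda_0$ is used: if $x + y = \lambda_0$, combining with $2x \leq \lambda_0 = x+y$ and $2y \leq \lambda_0 = x+y$ forces $x = y$, so $2x = \lambda_0$; but then $x$ would be a maximum of $I$, contradicting the choice of $\lambda_0$. Thus $d$ vanishes only on the diagonal.

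For the triangle inequality $d(a,c) \leq d(a,b) + d(b,c)$, with $a = (x,i)$, $b = (y,j)$, $c = (z,k)$, I would perform a case analysis on the coincidence pattern of $i, j, k$. Up to symmetry there are essentially four cases: (i) all equal, which reduces to the triangle inequality in $\Lambda$; (ii) $i = j \neq k$ (and symmetrically $i \neq j = k$), where the inequality simplifies to $y - x \leq |x - y|$; (iii) $i = k \neq j$, where the claim reduces to $|x - z| \leq 2\lambda_0 - x - 2y - z$, and is obtained by noting that $2y \leq \lambda_0$ and $2x, 2z \leq \lambda_0$ together give $|x - z| \leq \lambda_0 - x - z \leq 2\lambda_0 - x - 2y - z$; and (iv) $i, j, k$ all distinct, which reduces to $0 \leq \lambda_0 - 2y$, immediate from $y \in I$.

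The only genuinely delicate step is showing that $d$ is strictly positive off the diagonal when $i \neq j$; this is where the non-existence of a maximum of $I$ (equivalently, the non-divisibility of $\lambda_0$ by $2$) is used in an essential way. Everything else amounts to bookkeeping on the two defining formulas and repeated use of the bound $2t \leq \lambda_0$ for $t \in I$.
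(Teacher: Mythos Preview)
Your proof is correct and follows essentially the same approach as the paper: symmetry is immediate, positive definiteness uses that $I$ has no maximum, and the triangle inequality is handled by a case distinction on the pattern of indices. The paper is terser---it observes that when exactly two indices agree one actually has the \emph{equality} $d((x,i),(y,i)) + d((y,i),(z,j)) = d((x,i),(z,j))$ (for $x\leq y$), which simultaneously yields your cases (ii) and (iii) once combined with nonnegativity---and it glosses over positive definiteness, whereas you spell out explicitly why $\lambda_0 - x - y > 0$ requires the no-maximum hypothesis on $I$.
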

\begin{proof}
	Positive definiteness and symmetry follow directly, the triangle inequality requires a case distinction: If all three points are in $I\times \{i\}$, then the triangle inequality follows from the one on $\Lambda$. For $(x,i), (y,i), (z,j) \in X_1$ with $i\neq j$, where without loss of generality $x \leq y$, an explicit calculation shows
	\begin{equation*} \label{eq:triangle_eq}
		d((x,i),(y,i)) + d((y,i),(z,j)) = d((x,i),(z,j)).
	\end{equation*}
	For $(x,i), (y,j), (z,k) \in X_1$ with distinct $i,j,k$ a calculation shows that the strong triangle inequality
	\begin{equation*}\label{eq:triangle_strong}
		d((x,i),(z,k)) < d((x,i),(y,j)) + d((y,j),(z,k))
	\end{equation*}
	holds.
\end{proof}
\begin{lemma}\label{lem:axiom1}
	Let $(x,i),(y,j) \in X_1$ and $x\leq y$ or $i\neq j$. The image of the map
	\begin{align*}
	\varphi \colon 	[0,D] & \to X_1 \\
		t & \mapsto \left\{\begin{array}{ll}
			(x+t,i) & \text{if} \quad 2(x+t) < \lambda_0 \\
			(\lambda_0-x-t , j)  & \text{if}\quad 2(x+t) > \lambda_0 ,\\
		\end{array}\right. 
	\end{align*}
	where $D=d((x,i),(y,j))$, is a segment from $(x,i)$ to $(y,j)$. The $\Lambda$-metric space $(X_1,d)$ is geodesic. 
\end{lemma}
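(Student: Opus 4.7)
The proposal is to verify directly that the piecewise map $\varphi$ is a well-defined isometric embedding with the stated endpoints, then reduce the remaining case $y<x,\,i=j$ to the lemma by a trivial reparametrization in order to obtain geodesicity of $X_1$.

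First I would check that $\varphi$ is well-defined. The two cases $2(x+t)<\lambda_0$ and $2(x+t)>\lambda_0$ partition $[0,D]$: since $\lambda_0$ is not divisible by $2$ in $\Lambda$, the equality $2(x+t)=\lambda_0$ never occurs. I also need the image to lie in $X_1=\bigcup_{k=1}^3 I\times\{k\}$. For $i=j$ (so $x,y\in I$ with $x\leq y$), one has $2y\leq \lambda_0$ and in fact $2y<\lambda_0$ by indivisibility, so for every $t\in[0,D]$ with $D=y-x$ the value $2(x+t)\leq 2y<\lambda_0$ and only the first branch applies, yielding $(x+t,i)\in I\times\{i\}$. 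For $i\neq j$, one has $D=\lambda_0-x-y$, so $x+D=\lambda_0-y\in I$ and more generally $x+t\in[x,\lambda_0-y]\subseteq[0,\lambda_0]$ throughout; when the second branch applies, $2(\lambda_0-x-t)=2\lambda_0-2(x+t)<\lambda_0$, and non-negativity follows from $x+t\le x+D\le\lambda_0$.

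Next I would check the endpoints. At $t=0$ one has $2x\leq 2y\leq\lambda_0$, hence $2x<\lambda_0$ (indivisibility), so $\varphi(0)=(x,i)$. At $t=D$, if $i=j$ then $\varphi(D)=(y,i)$ as just discussed; if $i\neq j$, then $2(x+D)=2(\lambda_0-y)>\lambda_0$ (again by indivisibility of $\lambda_0$ and $2y\le\lambda_0$), so the second branch gives $\varphi(D)=(\lambda_0-x-D,j)=(y,j)$.

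Then I would verify the isometry condition $d(\varphi(s),\varphi(t))=|s-t|$ by a three-case analysis on which branches $\varphi(s)$ and $\varphi(t)$ land in. If both land in $I\times\{i\}$ via the first branch, or both in $I\times\{j\}$ via the second branch, the formula $d((u,k),(v,k))=|u-v|$ gives $|s-t|$ immediately. In the mixed case (which can only occur when $i\neq j$), after reordering assume $\varphi(s)=(x+s,i)$ and $\varphi(t)=(\lambda_0-x-t,j)$; then $2(x+s)<\lambda_0<2(x+t)$ forces $s<t$, and the cross-branch distance $\lambda_0-(x+s)-(\lambda_0-x-t)=t-s=|s-t|$ gives the required equality.

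Finally, for the geodesicity claim, any pair $(x,i),(y,j)\in X_1$ either satisfies the hypothesis $x\leq y$ or $i\neq j$, or else $i=j$ and $y<x$; in the latter situation I would simply apply the already-constructed $\varphi$ to the pair $(y,i),(x,i)$ and precompose with the isometry $t\mapsto D-t$ of $[0,D]$ to obtain a segment with endpoints $(x,i),(y,i)$. The main delicate point throughout is to systematically exploit that $\lambda_0\notin 2\Lambda$ so that the strict inequalities governing the branches of $\varphi$ really cover every relevant value of $t$; the rest is bookkeeping.
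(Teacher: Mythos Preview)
Your proof is correct and follows essentially the same approach as the paper's: split into the cases $i=j$ and $i\neq j$, verify well-definedness, endpoints, and the isometry property via the same three-case analysis, then obtain geodesicity by the trivial swap of the two points. One small slip: the claim ``$x+D=\lambda_0-y\in I$'' is false (indeed $2(\lambda_0-y)\geq\lambda_0$), but you never actually use it---what matters, and what you do establish, is that in the second branch $\lambda_0-x-t\geq 0$ and $2(\lambda_0-x-t)<\lambda_0$.
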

\begin{proof}
	If $i=j$ and $x\leq y$, $D=y-x$ and $2(x+t)<\lambda_0$ is satisfied for all $t\in [0,D]$. It is then clear that the map $\varphi$ is an isometry with $\varphi(0) = (x,i)$ and $\varphi(D)=(y,j)$. The corresponding segment is given by $[x,y]\times\{i\}$. 
	
	For $i\neq j$, we have $D=\lambda_0-x-y$. The map $\varphi$ is well defined since if $2(x+t)>\lambda_0$, then $2(\lambda_0-x-t) = 2\lambda_0 - 2(x+t)  < 2\lambda_0 - \lambda_0 = \lambda_0$.	The endpoints behave correctly, $\varphi(0) = (x,i)$ and $\varphi(D) = (y,j)$ since $2(x+D) = 2(x+\lambda_0-x-y) = \lambda_0 + (\lambda_0-2y) > \lambda_0$ and hence $\varphi(D) = (\lambda_0 - x - D , j) = (y,j)$. It remains to show that $\varphi$ is an isometry. Let $t,t' \in [0,D]$ with $t<t'$. If $2(x+t)$ and $2(x+t')$ are both either larger or smaller than $\lambda_0$, then $\varphi(t)$ and $\varphi(t')$ are contained in the same copy of $I$ and the isometries follow from calculations in $\Lambda$. If $2(x+t)<\lambda_0$ and $2(x+t')>\lambda_0$, then
	$$
	d(\varphi(t), \varphi(t')) = \lambda_0 - (x+t) - (\lambda_0-x-t') = t'-t.
	$$ 
	Given any two points $(x,i)$ and $(y,j)$ in $X$ we may assume without loss of generality $x\leq y$. The image of the map $\varphi$ thus gives us the segment.
\end{proof}
From Lemma \ref{lem:axiom1} we can conclude that $[0,\lambda_0]$ is isometric to $I\times\{i\} \cup I \times \{j\}$ whenever $i\neq j$, (choose $x=y=0$). The segments described in Lemma \ref{lem:axiom1} can therefore be viewed as subsets of $[0,\lambda_0]$. The next lemma states that the segments in Lemma \ref{lem:axiom1} are the only ones.
\begin{lemma}\label{lem:unique_axiom1}
	The $\Lambda$-metric space $(X_1,d)$ is uniquely geodesic.
\end{lemma}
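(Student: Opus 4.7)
Fix endpoints $(x,i), (y,j) \in X_1$ with $x \le y$ or $i \neq j$, set $D := d((x,i),(y,j))$, and let $\psi \colon [0,D] \to X_1$ be an arbitrary parametrization of any segment from $(x,i)$ to $(y,j)$. For every $t \in [0,D]$, write $\psi(t) = (z,k)$. The isometry property of $\psi$, combined with $d(\psi(0),\psi(D)) = D$, forces
\[
d((x,i),(z,k)) + d((z,k),(y,j)) = D.
\]
The plan is to show that, for each $t$, this equation together with the explicit distance formula of Lemma \ref{lem:metric} pins down $(z,k)$ uniquely and produces exactly the point prescribed by the segment of Lemma \ref{lem:axiom1}. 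Throughout the argument I will use repeatedly that $\lambda_0 \notin 2\Lambda$: indeed, if $\lambda_0 = 2\mu$ for some $\mu \in \Lambda$, then $\mu$ would be the maximum of $I$, contradicting the defining hypothesis on $\lambda_0$.

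I split by cases on how $k$ relates to $i$ and $j$. In the simpler case $i=j$, if $k=i$ the equation becomes $|x-z|+|z-y|=y-x$, forcing $x\le z\le y$ and then $z=x+t$; if $k\neq i$, the equation reads $(\lambda_0-x-z)+(\lambda_0-z-y)=y-x$, giving $z=\lambda_0-y$, which combined with $z \in I$ (i.e.\ $2z \leq \lambda_0$) and $2y \leq \lambda_0$ forces $2y=\lambda_0$, contradicting $\lambda_0\notin 2\Lambda$. In the case $i\neq j$, where $D=\lambda_0-x-y$, the three subcases $k=i$, $k=j$, and $k\notin\{i,j\}$ are handled analogously: the first yields $z=x+t$, the second $z=\lambda_0-x-t$, and the third forces $2z=\lambda_0$, once again ruled out by $\lambda_0\notin 2\Lambda$. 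Moreover, the constraint $z\in I$ translates to $2(x+t)\le\lambda_0$ in the first subcase and $2(x+t)\ge\lambda_0$ in the second; since $2(x+t)=\lambda_0$ is impossible, the two alternatives are mutually exclusive and jointly cover every $t\in[0,D]$, so $\psi(t)$ coincides with the point provided by Lemma \ref{lem:axiom1}.

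The argument is essentially a bookkeeping exercise once the distance formula is unwound, and the main obstacle is just organizing the cases cleanly. The only nontrivial ingredient is the non-divisibility $\lambda_0 \notin 2\Lambda$, which is precisely the structural feature of $\Lambda$ that makes this counterexample possible: it is what rules out a would-be ``branch point'' $\lambda_0/2$ sitting in a third copy, forcing any segment to run through exactly one of the two copies containing its endpoints.
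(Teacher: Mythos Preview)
Your argument is correct and closely parallels the paper's, though the final step differs slightly. Both proofs fix an arbitrary parametrization $\psi$ of a segment between $(x,i)$ and $(y,j)$ and use the additivity $d((x,i),\psi(t))+d(\psi(t),(y,j))=D$ together with the explicit distance formula to rule out that $\psi(t)$ lies in a third copy $I\times\{k\}$ with $k\notin\{i,j\}$; in each case this forces $2z=\lambda_0$ or $2y=\lambda_0$, contradicting $\lambda_0\notin 2\Lambda$. The paper then stops the pointwise computation there: once every segment from $(x,i)$ to $(y,j)$ is known to lie inside $I\times\{i\}\cup I\times\{j\}$, it invokes the isometry of this union with $[0,\lambda_0]$ (from Lemma~\ref{lem:axiom1}) and the fact that closed $\Lambda$-intervals are uniquely geodesic. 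You instead continue the case analysis and pin down $\psi(t)$ explicitly as $(x+t,i)$ or $(\lambda_0-x-t,j)$, matching the parametrization of Lemma~\ref{lem:axiom1} point by point. Your route is more self-contained (no appeal to external uniqueness in $[0,\lambda_0]$), while the paper's is more conceptual and avoids redoing the interval case by hand; both arrive at the same conclusion with essentially the same amount of work.
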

\begin{proof}
	Every segment from $(x,i)$ to $(y,j)$ admits a parametrization of the form $\psi \colon [0,D] \to X_1$, where $\psi(0) = (x,i)$ and $\psi(D) = (y,j)$. We first claim that for every $t \in [0,D]$, $\psi(t) \in I\times\{i\} \cup I \times \{j\}$. If this were not the case, then we would have
	\begin{align*}
		d((x,i),\psi(t)) &= \lambda_0 - x - r \\
		d((y,j),\psi(t)) &= \lambda_0 - y - r 
	\end{align*}
    for some $r\in I$ with $(r,k) = \psi(t)$. We would then have
    $$
    D = d((x,i),(y,j)) =  d((x,i),\psi(t)) + d((y,j),\psi(t)) = 2\lambda_0 - x - y - 2r
    $$
    since $\psi$ is an isometry. If $i=j$ and without loss of generality $x<y$ then
    $$
    y-x = 2\lambda_0 - x - y - 2r
    $$
    which contradicts $\lambda_0>2r$ and $\lambda_0 > 2y$. If $i\neq j$, then
    $$
    D = \lambda_0 - x - y = 2\lambda_0 - x - y - 2r
    $$
    which contradicts $\lambda_0 > 2r$.
    
    Let $k\neq i$ and $k=j$ if $j\neq i$. We can use Lemma \ref{lem:axiom1} with $(0,i)$ and $(0,k)$ to get an isometry $f \colon  [0,\lambda_0]  \to I\times\{i\} \cup I \times\{k\}$. %Then $f(\psi([0,D]))$ is a segment in $[0,\lambda_0]$. 
    Since segments in $[0,\lambda_0]$ are unique, also segments in $f([0,\lambda_0]) = I\times\{i\}\cup I \times \{k\}$, such as $\psi([0,D])$ are unique, concluding the proof.
\end{proof}

\begin{lemma}
	The $\Lambda$-metric space $(X_1,d)$ satisfies axiom (2).
\end{lemma}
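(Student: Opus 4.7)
The plan is to combine the explicit description of segments in Lemma \ref{lem:axiom1} with the unique geodesicity established in Lemma \ref{lem:unique_axiom1}. Given two segments $s, s'$ with $s \cap s' = \{p\}$ and $p$ an endpoint of each, I will write $p = (r, k)$ and let $a = (x, i)$, $a' = (y, j)$ be the non-shared endpoints. The goal is to show that $s \cup s'$ coincides with the unique segment from $a$ to $a'$ furnished by Lemma \ref{lem:axiom1}.

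The structural observation driving the argument is that whenever a segment emanates from $(r, k)$ and terminates in a branch different from $k$, the parametrization in Lemma \ref{lem:axiom1} forces it to sweep out the entire set $\{(r', k) : r' \in I,\, r' \geq r\}$ before jumping branches. Since $I$ has no maximum --- this being exactly the failure of condition (a) --- this set strictly contains $\{(r, k)\}$.

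A case analysis on the triple $(i, j, k)$ then resolves everything. If both $i \neq k$ and $j \neq k$, the two segments would share the nontrivial set above, contradicting $s \cap s' = \{p\}$; so, up to relabelling, $i = k$. When $i = j = k$, both segments lie in $I \times \{k\}$ and the intersection hypothesis forces $x$ and $y$ to lie on opposite sides of $r$, whence $s \cup s' = [\min(x,y), \max(x,y)] \times \{k\}$ is a segment. When $i = k$ and $j \neq k$, the same hypothesis forces $x < r$; then $s = [x, r] \times \{k\}$, while $s'$ decomposes as $\{(r', k) : r' \in I,\, r' \geq r\} \cup \{(u, j) : u \in I,\, u \geq y\}$, and their union matches the parametrized segment from $(x, k)$ to $(y, j)$ in Lemma \ref{lem:axiom1}.

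The main obstacle is the bookkeeping in this last case: carefully identifying the branch-$k$ portions of $s$ and $s'$ and matching them against the full segment from $(x, k)$ to $(y, j)$. This uses convexity of $I$ inside $\Lambda$, together with the fact that $2r' < \lambda_0$ holds strictly for every $r' \in I$, because $\lambda_0$ is not divisible by $2$; consequently the condition $2(x+t) < \lambda_0$ in Lemma \ref{lem:axiom1} precisely captures the branch-$k$ portion of the target segment, making the identification of $s \cup s'$ with the segment from $a$ to $a'$ routine.
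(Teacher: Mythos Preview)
Your argument is correct and shares the paper's key observation: a segment from $(r,k)$ to a different branch must contain all of $\{(r',k): r'\in I,\ r'\geq r\}$, which strictly exceeds $\{(r,k)\}$ because $I$ has no maximum; hence the two segments cannot both leave branch $k$. Where you then finish by an explicit case analysis---matching $s\cup s'$ point-by-point against the parametrization of Lemma~\ref{lem:axiom1}---the paper instead observes that once both segments are forced into a single $I\times\{i\}\cup I\times\{j\}$, this subspace is isometric to $[0,\lambda_0]$, and simply invokes axiom~(2) for the $\Lambda$-tree $[0,\lambda_0]$. That shortcut replaces your ``main obstacle'' (the bookkeeping in the mixed case $i=k\neq j$) with a one-line reduction, at the cost of appealing to the fact that closed $\Lambda$-intervals are $\Lambda$-trees; your route is more self-contained but longer.
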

\begin{proof}
	Let $s, s'$ be two segments that intersect exactly in one common endpoint $(x,i) \in X_1$. We will show that there is a $j\neq i$ such that both $s$ and $s'$ are contained in $I\times\{i\}\cup I\times\{j\}$. If the other endpoint of $s$ is in $I\times\{j\}$ and the other endpoint of $s'$ is in $I\times\{k\}$ for distinct $i, j,k$, then $s \subset I\times\{i\}\cup I \times \{j\}$ and $s' \subset I\times\{i\}\cup I \times \{k\}$ and we know how the segments look like by the description of segments in Lemmas \ref{lem:axiom1} and \ref{lem:unique_axiom1}. Let $y\in I$ with $x<y$, which exists because $x$ is not a maximum of $I$, (we are assuming $I$ does not have a maximum). But then $(y,i)\in s\cap s'$ by the description of the segments in Lemma \ref{lem:axiom1}, which contradicts the assumption $s\cap s' = \{(x,i)\}$. 
	
	We conclude that there is a $j\neq i$ such that both $s$ and $s'$ are contained in $I\times\{i\}\cup I\times\{j\}$, which is isometric to $[0,\lambda_0]$.
	Since axiom (2) holds in $[0,\lambda_0]$ also $s\cup s'$ is a segment.
\end{proof}

\begin{lemma}
	The $\Lambda$-metric space $(X_1,d)$ does not satisfy axiom (3).
\end{lemma}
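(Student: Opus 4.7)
The plan is to exhibit two segments in $X_1$ sharing a common endpoint whose intersection fails to be a segment. Using Lemma \ref{lem:axiom1} with $x = y = 0$, I would produce the segments $s := [(0,1),(0,2)]$ and $s' := [(0,1),(0,3)]$, both of length $\lambda_0$, which share the endpoint $(0, 1)$.

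The first step is to compute $s \cap s'$ explicitly via the parametrization in Lemma \ref{lem:axiom1}. A preliminary observation is that $\lambda_0$ is not divisible by $2$ in $\Lambda$: otherwise $\lambda_0 / 2$ would lie in $I$ and would be its maximum, contradicting our standing assumption. Consequently, for every $t \in \Lambda$, exactly one of $2t < \lambda_0$ and $2t > \lambda_0$ holds, so the two branches of the parametrization sweep out precisely $I \times \{1\}$ and $I \times \{j\}$. Hence $s = (I \times \{1\}) \cup (I \times \{2\})$ and $s' = (I \times \{1\}) \cup (I \times \{3\})$, giving $s \cap s' = I \times \{1\}$.

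The second step is to argue that $I \times \{1\}$ is not a segment. Note that the map $(t, 1) \mapsto t$ is an isometry from $I \times \{1\}$ onto $I \subseteq \Lambda$, because $d((t,1),(s,1)) = |t-s|$. If $I \times \{1\}$ were a segment, then composing its parametrization with this isometry would yield an isometric embedding of a closed $\Lambda$-interval $[a, b]$ with image $I$. Since any isometric embedding between subsets of $\Lambda$ (viewed with its standard metric) is either a translation or a reflection, the image of a closed $\Lambda$-interval must again be a closed $\Lambda$-interval. But $I$ has a minimum $0$ and no maximum, so it is not of the form $[c, d]$, yielding the desired contradiction.

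I expect no deep obstacle here; the one point that genuinely requires care is the case analysis distinguishing $2t < \lambda_0$ from $2t > \lambda_0$, which is what rules out any accidental overlap or gap in the two halves of each segment and pins down the intersection as exactly $I \times \{1\}$. The non-$2$-divisibility of $\lambda_0$ does all the work there.
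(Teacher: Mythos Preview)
Your proposal is correct and follows essentially the same approach as the paper: both exhibit the segments $s=[(0,1),(0,2)]$ and $s'=[(0,1),(0,3)]$, identify $s\cap s'=I\times\{1\}$, and conclude this is not a segment because $I$ has no maximum. Your version simply spells out in more detail why the two branches of the parametrization from Lemma~\ref{lem:axiom1} exhaust $I\times\{1\}$ and $I\times\{j\}$ (via the non-$2$-divisibility of $\lambda_0$) and why $I$ cannot be isometric to a closed $\Lambda$-interval, whereas the paper asserts these facts directly.
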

\begin{proof}
	Let $s$ be the unique segment from $(0,1)$ to $(0,2)$ and let $s'$ be the unique segment from $(0,1)$ to $(0,3)$. The point $(0,1)$ is a common endpoint of $s$ and $s'$. Since $s\cap s' = I\times \{1\}$, and $I$ does not have a maximum, $s\cap s'$ is not isometric to a closed $\Lambda$-interval and hence is not a segment.  
\end{proof}

We have constructed a $\Lambda$-metric space $(X_1,d)$ which satisfies axioms (1) and (2) but not (3). This shows $\neg$(a) $\implies \neg$(b) in Theorem \ref{thm:main} and thus completes its proof.

This example shows that axiom (3) forces the branchpoints to be part of the $\Lambda$-tree. For groups such as $\Lambda=\qq$ (or $\zz[\frac{1}{2}]$), one might be tempted to construct $\Lambda$-metric spaces with branchpoints at an irrational distance (or at $\frac{1}{3}$). However, that construction fails, since the resulting metric does not take values in $\Lambda$. The number $\frac{1}{2}$ plays a special role, as reflected in condition (a) of Theorem \ref{thm:main}.

\subsection{Construction of $X_2$} It was noted in \cite{ab87}, that for $\Lambda = \mathbb{R}$, axiom (2) is independent from the axioms (1) and (3), consider for instance $\rr^2$ with the Euclidean distance. In view of Proposition \ref{prop:1_3_to_unique_1}, any such example has to be geodesically unique. The idea of this section is thus to build a uniquely geodesic $\Lambda$-metric space $X_2$ which is $2$-dimensional in some sense.

Let $\Lambda$ be the additive group of a ordered field. The idea is to create a version of the $\ell_1$-distance (also called the Manhattan-distance) but using polar coordinates instead of the usual Cartesian coordinates. The distance can be thought as the length of the shortest path consisting only of radial and circumferential movements. To be uniquely geodesic, we restrict ourselves to a subset. The situation is illustrated in Figure \ref{fig:spider}.
We consider the set
$$
X_2 = (0,2]\times [0,1] = \{ (t, \varphi) \in \Lambda^2 \colon 0 < t \leq 2 ,\ 0 \leq \varphi \leq 1 \}
$$
and endow it with the function $d \colon X_2 \times X_2 \to \Lambda$ given by
\begin{align*}
	d((t_1,\varphi_1), (t_2,\varphi_2)) =  |t_2 - t_1| + \min \{t_1,t_2\} \cdot |\varphi_2 - \varphi_1| .
\end{align*}

\begin{figure}[h]
	\centering
	\includegraphics[scale=1.1]{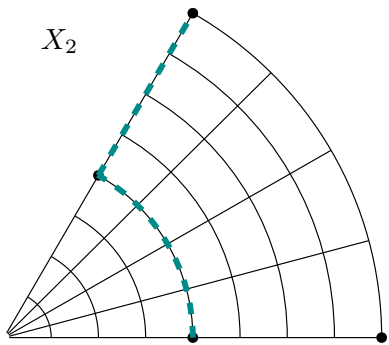}
	\caption{In the $\Lambda$-metric space $X_2$ segments consist of a radial part and the shorter of two circumferential parts. Unlike in the Manhattan-distance, segments are unique. }
	\label{fig:spider}
\end{figure}

\begin{lemma}
	The pair $(X_2, d)$ is a $\Lambda$-metric space.
\end{lemma}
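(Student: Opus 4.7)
The plan is to verify the three defining properties of a $\Lambda$-metric space in turn, with the triangle inequality being the only non-routine step.

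\textbf{Positive definiteness and symmetry.} Both terms in the definition are non-negative since $|t_2 - t_1|, |\varphi_2 - \varphi_1| \geq 0$ and $\min\{t_1, t_2\} > 0$ (by $X_2 \subseteq (0,2] \times [0,1]$). If $d((t_1, \varphi_1), (t_2, \varphi_2)) = 0$, then $|t_2-t_1|=0$ forces $t_1 = t_2$; and then positivity of $\min\{t_1,t_2\}$ forces $|\varphi_2-\varphi_1|=0$. Symmetry is immediate since $|\cdot|$ and $\min$ are symmetric in their arguments.

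\textbf{Triangle inequality, setup.} Fix three points $x_i = (t_i, \varphi_i)$, $i=1,2,3$, and abbreviate $\delta_{ij} = |t_i - t_j|$, $\epsilon_{ij} = |\varphi_i - \varphi_j|$, $m_{ij} = \min\{t_i, t_j\}$. The target inequality is
$$\delta_{13} + m_{13}\, \epsilon_{13} \;\leq\; \delta_{12} + m_{12}\,\epsilon_{12} + \delta_{23} + m_{23}\,\epsilon_{23}.$$
Setting $S := \delta_{12} + \delta_{23} - \delta_{13} \geq 0$ (triangle inequality for $|\cdot|$ on $\Lambda$), it suffices to prove
$$m_{13}\, \epsilon_{13} \;\leq\; m_{12}\, \epsilon_{12} + m_{23}\, \epsilon_{23} + S.$$

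\textbf{Triangle inequality, case analysis.} I would split on the position of $t_2$ relative to $m_{13}$. If $t_2 \geq m_{13}$, then $m_{12} \geq m_{13}$ and $m_{23} \geq m_{13}$, so using $\epsilon_{13} \leq \epsilon_{12} + \epsilon_{23}$ one gets $m_{12}\epsilon_{12} + m_{23}\epsilon_{23} \geq m_{13}(\epsilon_{12}+\epsilon_{23}) \geq m_{13}\epsilon_{13}$, and $S \geq 0$ closes the case. If $t_2 < m_{13}$ (so $t_2 < t_1$ and $t_2 < t_3$), then $m_{12}=m_{23}=t_2$, $\delta_{12} = t_1-t_2$, $\delta_{23} = t_3-t_2$, and $\delta_{13} = \max(t_1,t_3) - m_{13}$. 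A direct computation yields $S = 2(m_{13} - t_2)$. Combining with $\epsilon_{12} + \epsilon_{23} \geq \epsilon_{13}$, the right-hand side is at least $t_2\epsilon_{13} + 2(m_{13}-t_2)$, so the inequality reduces to $(m_{13}-t_2)\epsilon_{13} \leq 2(m_{13}-t_2)$, which holds since $\epsilon_{13} \leq 1 \leq 2$.

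\textbf{Main obstacle.} The only non-obvious point is the second case, where going through $x_2$ at a small radius offers a cheap angular traversal; the slack $S$ generated in the radial part exactly compensates, and crucially the bound $|\varphi| \leq 1$ on $X_2$ is used (so the restriction to $\varphi \in [0,1]$ is not merely aesthetic but is essential for the triangle inequality).
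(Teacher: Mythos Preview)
Your proof is correct and follows essentially the same approach as the paper: a case split on whether the intermediate point's radial coordinate lies below or above the minimum of the other two radii, with the key bound $|\varphi_i-\varphi_j|\leq 1<2$ used exactly in the ``below'' case. Your notation (introducing $S$ and $m_{ij}$ and avoiding a WLOG on $t_1\leq t_2$) is somewhat cleaner and more symmetric than the paper's direct computation, but the substance is the same.
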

\begin{proof}
	Symmetry is clear from the definition and positive definiteness follows from the fact that $\Lambda$ is a field. We prove the triangle inequality. Let $p_i = (t_i, \varphi_i) \in X_2$ for $i \in \{1,2,3\}$ be three points. To prove $d(p_1,p_2) \leq d(p_1, p_3) + d(p_3,p_2)$, we may assume without loss of generality that $t_1 \leq t_2$. We first show the triangle inequality for the case $t_3 \leq t_1$,
	\begin{align*}
		d(p_1,p_2) &= t_2-t_1 + t_1 |\varphi_2 - \varphi_1| \\
		&= t_2 -t_1 + (t_1 - t_3) |\varphi_2 - \varphi_1| + t_3 |\varphi_2 - \varphi_1| \\
		&< t_2 -t_1 + 2(t_1 - t_3) + t_3 |\varphi_2 - \varphi_1| \\
		&\leq  t_2 -t_3 + t_1 - t_3 + t_3 |\varphi_3 - \varphi_1| + t_3 |\varphi_2 - \varphi_3| \\
		&= d(p_1,p_3) + d(p_3, p_2)
	\end{align*}
	and then in the case $t_1 \leq t_3$
	\begin{align*}
		d(p_1, p_2) &= t_2-t_1 + t_1 |\varphi_2 - \varphi_1| \\ 
		&\leq t_2 - t_3 + t_3 - t_1 + t_1 |\varphi_3 - \varphi_1| + t_1 |\varphi_2 - \varphi_3| \\
		&\leq |t_3-t_2| + |t_3-t_1| + t_1 |\varphi_3 - \varphi_1| + t_1 |\varphi_2 - \varphi_3|\\
		&= d(p_1,p_2) + d(p_2, p_3).
	\end{align*}
\end{proof}
\begin{remark}
	We note that we could also define this metric on some other subsets of $\Lambda \times \Lambda$. For some examples like $(0,1]\times[0,2]$ and $\Lambda_{\geq 0} \times [0,2]$ we also get a $\Lambda$-metric space, but the triangle inequality would not hold on $(0,1]\times[0,3]$. We could also add one more point $(0,0)$ to $X_2$ and get a metric space that includes the "centerpoint". % of the spider-web. 
\end{remark}

\begin{lemma}\label{lem:2ax1}
	The $\Lambda$-metric space $(X_2,d)$ is uniquely geodesic. 
\end{lemma}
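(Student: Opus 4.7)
The plan is to characterize explicitly the set $B(p_1,p_2) := \{q \in X_2 \colon d(p_1,q) + d(q,p_2) = d(p_1,p_2)\}$ of points ``between'' $p_1 = (t_1,\varphi_1)$ and $p_2 = (t_2,\varphi_2)$ (assuming without loss of generality $t_1 \leq t_2$), and to show that it is an L-shaped path: a circumferential arc $\{(t_1,\psi) \colon \psi \text{ between } \varphi_1 \text{ and } \varphi_2\}$ at the smaller radius, followed by a radial segment $\{(s,\varphi_2) \colon t_1 \leq s \leq t_2\}$. Since any isometric parametrization $\gamma \colon [0,D] \to X_2$ of a segment with endpoints $p_1,p_2$ necessarily satisfies $\gamma(u) \in B(p_1,p_2)$ by the triangle inequality, it will then suffice to produce a single arc-length parametrization of this L-shape and to argue that it is the only one.

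For the characterization, I would run a case analysis on $q = (s,\psi)$ according to whether $s \leq t_1$, $t_1 \leq s \leq t_2$, or $s \geq t_2$. In each case I expand $d(p_1,q) + d(q,p_2) = d(p_1,p_2)$ using the defining formula for $d$ and apply $|\psi - \varphi_1| + |\varphi_2 - \psi| \geq |\varphi_2 - \varphi_1|$. The case $s \leq t_1$ reduces, after cancellation, to
\[
(t_1 - s)\bigl(2 - |\varphi_2 - \varphi_1|\bigr) \leq 0,
\]
and the case $s \geq t_2$ reduces to an analogous identity forcing $s = t_2$ and $\psi = \varphi_2$. The decisive input here is the strict inequality $|\varphi_2 - \varphi_1| \leq 1 < 2$, which comes from the restriction $\varphi_i \in [0,1]$ in the definition of $X_2$; this is precisely what forces $s = t_1$ in the outer-radius case and makes the L-shape rigid.

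For existence, I define $\gamma \colon [0,D] \to X_2$ by sending $u$ to $\bigl(t_1,\varphi_1 + \epsilon u/t_1\bigr)$ on $[0, t_1|\varphi_2-\varphi_1|]$, where $\epsilon \in \{-1,+1\}$ is the sign of $\varphi_2 - \varphi_1$, and to $\bigl(t_1 + u - t_1|\varphi_2 - \varphi_1|,\, \varphi_2\bigr)$ on the remaining interval. Division by $t_1$ is legal because $\Lambda$ is a field and $t_1 > 0$ on $X_2$, and the image stays in $X_2$ since the angular coordinate moves monotonically from $\varphi_1$ to $\varphi_2$ inside $[0,1]$. A direct computation confirms $d(\gamma(u),\gamma(u')) = |u-u'|$ by splitting into subcases according to which leg of the L the two parameters lie on.

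For uniqueness, I observe that $u \mapsto d(p_1, \cdot)$ is strictly increasing along the natural L-parametrization described above, so there is exactly one point of $B(p_1,p_2)$ at distance $u$ from $p_1$ for each $u \in [0,D]$; any isometric parametrization $\gamma'$ of a segment between $p_1$ and $p_2$ must therefore agree with $\gamma$ pointwise. The main technical obstacle I anticipate is the bookkeeping in the three-case characterization of $B(p_1,p_2)$, and in particular ensuring that the inequality $|\varphi_2 - \varphi_1| < 2$ is used sharply enough to collapse all intermediate cases down to the L-shape.
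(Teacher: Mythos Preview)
Your proposal is correct and follows essentially the same route as the paper: both construct the explicit L-shaped parametrization (circumferential arc at radius $t_1$ followed by a radial piece), and both run the same three-case analysis on the radial coordinate of a third point, with the decisive inequality $|\varphi_2-\varphi_1|\leq 1<2$ doing the work in the outer case. The only cosmetic difference is that the paper phrases the case analysis as ``strict triangle inequality for $p\notin s$'' whereas you phrase it as ``characterize $B(p_1,p_2)$''; your added remark that $u\mapsto d(p_1,\gamma(u))$ is strictly increasing to pin down the parametrization is a nice touch the paper leaves implicit.
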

\begin{proof}
	For the points $p_i=(t_i,\varphi_i)$ with $i \in \{1,2\}$ we may assume without loss of generality that $t_1 \leq t_2$. We write $\varphi_0 := t_1\cdot |\varphi_2-\varphi_1|$, so that $d(p_1,p_2) = \varphi_0 + (t_2-t_1)$. One can check that % Proof done by hand, 3 cases..
	\begin{align*}
		f \colon [0,d(p_1,p_2)] & \to X_2 \\
		t & \mapsto  \left\{ \begin{array}{ll}
			\left(t_1, \frac{1}{\varphi_0} \left( t\cdot \varphi_2 + (\varphi_0 - t) \cdot \varphi_1 \right)\right) & \text{if} \quad t < \varphi_0 \\
			(t - \varphi_0 + t_1 , \varphi_2 ) &  \text{if} \quad t\geq \varphi_0
		\end{array} \right.
	\end{align*}
	defines a segment-map whose image $s$ is a segment with $p_1$ and $p_2$ as endpoints. Let $p=(t,\varphi) \in X \setminus s$. We claim that then $ d(p_1 , p_2) < d(p_1, p) + d(p,p_2)$. 
	If $t< t_1 \leq t_2$, then %$t-t_1 < t_1 - t = |t_1 - t|$ and 
	\begin{align*}
		d(p_1,p_2) &= t_2-t_1 + t_1 |\varphi_2 - \varphi_1| \\
		&= t_2 -t_1 + (t_1 - t) |\varphi_2 - \varphi_1| + t |\varphi_2 - \varphi_1| \\
		&< t_2 -t_1 + 2(t_1 - t) + t |\varphi_2 - \varphi_1| \\
		&\leq  t_2 -t + t_1 - t + t |\varphi_3 - \varphi_1| + t |\varphi_2 - \varphi_3| \\
		&= d(p_1,p) + d(p, p_2).
	\end{align*}
	If $t_1 \leq t_2 < t$, then $t_2-t < t-t_2 = |t-t_2|$ and
	\begin{align*}
		d(p_1, p_2) &= t_2 - t + t - t_1 + t_1|\varphi_2 - \varphi_1|\\
		& < |t_2-t| + |t_1-t|  + t_1|\varphi_2 - \varphi| + t_1 |\varphi - \varphi_1| \\
		& \leq |t_2-t|   + t_2|\varphi_2 - \varphi| + |t_1-t| + t_1 |\varphi - \varphi_1| \\
		& = d(p_1,p) + d(p,p_2).
	\end{align*}
	If $t \in [t_1,t_2]$, we have
	\begin{align*}
		d(p_1, p_2) &= |t_2 - t_1| + t_1|\varphi_2 - \varphi_1|\\
		& \leq |t_2 - t| + |t - t_1| + t_1(|\varphi_2 - \varphi| + |\varphi - \varphi_1|) \\
		& = |t - t_1| + t_1 |\varphi - \varphi_1| + |t_2 - t| + t_1|\varphi_2 - \varphi| \\
		& \leq |t - t_1| + t_1 |\varphi - \varphi_1| + |t_2 - t| + t|\varphi_2 - \varphi| \\
		& = d(p_1,p) + d(p,p_2)
	\end{align*}
	and we see that in the last inequality, equality can only hold when $t_1|\varphi_2-\varphi|=t|\varphi_2-\varphi|$, i.e. when $t_1 = t$ or $\varphi_2 = \varphi$, which is exactly when $p\in s$. This shows the claim and we can conclude that $s$ is the unique segment from $p_1$ to $p_2$.
\end{proof}

\begin{remark}
	We note that the $\Lambda$-metric space $X_2$ could also be defined more generally for $\Lambda$ the additive group of an ordered integral domain, but the segment-parametrization in Lemma \ref{lem:2ax1} requires division.   
\end{remark}

\begin{lemma}
	The $\Lambda$-metric space $(X_2,d)$ satisfies axiom (3).
\end{lemma}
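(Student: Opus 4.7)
The plan is to leverage the explicit segment description obtained in the proof of Lemma \ref{lem:2ax1}: for $p_1=(t_1,\varphi_1)$ and $p_2=(t_2,\varphi_2)$ with $t_1\leq t_2$, the unique segment from $p_1$ to $p_2$ is
$$[p_1,p_2]=\{t_1\}\times[\min(\varphi_1,\varphi_2),\max(\varphi_1,\varphi_2)]\,\cup\,[t_1,t_2]\times\{\varphi_2\},$$
i.e.\ a circumferential arc at the inner radius followed by a radial spoke at the outer angle. Every segment in $X_2$ thus decomposes uniquely as ``arc $\cup$ spoke''.

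Given two segments $s=[p,q]$ and $s'=[p,q']$ with common endpoint $p=(t,\varphi)$, write $q=(t_1,\varphi_1)$ and $q'=(t_2,\varphi_2)$, and run a case distinction on the sign of $t_i-t$: (A) both $t_1,t_2\leq t$; (B) both $t_1,t_2\geq t$; (C) $t_1\leq t\leq t_2$. In each case, substitute into the formula above to read off $s$ and $s'$, and then compute $s\cap s'$ as the union of the four pairwise intersections (arc-arc, arc-spoke, spoke-arc, spoke-spoke).

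In Case (A) both spokes emanate from $p$ inward at angle $\varphi$, and overlap on $[\max(t_1,t_2),t]\times\{\varphi\}$; arcs at distinct inner radii $t_1\neq t_2$ are disjoint, while the arc-spoke cross-terms are already contained in the spoke overlap, so $s\cap s'$ equals that spoke. If $t_1=t_2$, the two arcs may further overlap in a subarc on the common side of $\varphi$, and the intersection is a spoke plus subarc, matching the segment description for a segment from $p$ to an appropriate point at radius $t_1$. In Case (B) both segments begin with an arc at radius $t$ through $\varphi$; when $\varphi_1,\varphi_2$ lie on the same side of $\varphi$ these arcs overlap in a subarc $\{t\}\times[\varphi,\min(\varphi_1,\varphi_2)]$, otherwise they meet only at $\{p\}$; the outward spokes meet only when $\varphi_1=\varphi_2$, in which case one contains the other, and again the union is of the required form. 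In Case (C) the initial inward spoke of $s$ at angle $\varphi$ and the initial outward arc of $s'$ at radius $t$ point in geometrically incompatible directions, so one checks that every pairwise intersection either is empty or reduces to $\{p\}$, yielding the degenerate segment $s\cap s'=\{p\}$; the edge cases $t_1=t$ or $t_2=t$ collapse into (A) or (B).

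The main obstacle is not conceptual but one of bookkeeping: each case splits further according to whether $\varphi_1$ and $\varphi_2$ lie on the same or opposite sides of $\varphi$ and whether the coincidences $t_1=t_2$ or $\varphi_1=\varphi_2$ occur, giving roughly eight to ten subcases. In every subcase the computed $s\cap s'$ fits the arc-plus-spoke form of Lemma \ref{lem:2ax1} between two explicitly identified endpoints, hence is a segment, establishing axiom (3).
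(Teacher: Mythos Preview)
Your argument is correct. The explicit arc-plus-spoke description of segments is exactly the right tool, and the three-way split on the sign of $t_i-t$ organizes the computation cleanly; I have spot-checked the subcases and the intersections come out as you claim.

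Your decomposition differs from the paper's. The paper splits instead on whether $\varphi_1=\varphi_2$, attempting to confine $s\cap s'$ to a single circle $\{t\}\times[0,1]$ when $\varphi_1\neq\varphi_2$ and to a single radial ray when $\varphi_1=\varphi_2$, and then to invoke axiom~(3) for those one-dimensional $\Lambda$-trees. This is slicker in intention, but as written it is too coarse: the variable $t$ is never defined, and more seriously the containment $s\cap s'\subseteq\{t\}\times[0,1]$ can fail. For instance with common endpoint $p=(2,0)$ and other endpoints $p_1=(1,1)$, $p_2=(1,\tfrac12)$ one has $\varphi_1\neq\varphi_2$, yet both spokes lie at angle $\varphi=0$ (the angle of the \emph{outer} endpoint $p$), so $s\cap s'=[1,2]\times\{0\}\cup\{1\}\times[0,\tfrac12]$ is a genuine L-shape, not contained in any single circle. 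Your radial case split handles this correctly (it is your Case~(A) with $t_1=t_2$). So what you lose in elegance you gain in actually closing the argument; the paper's route would need essentially the same subcase analysis to be made rigorous.
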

\begin{proof}
	We know from Lemma \ref{lem:2ax1} how segments look like. Let $s_1, s_2$ be two segments with a common endpoint $p$. Let $p_1=(t_1,\varphi_1)$ be the other endpoint of $s_1$ and $p_2=(t_2,\varphi_2)$ be the other endpoint of $s_2$. If $\varphi_1\neq\varphi_2$, then the intersection $s_1 \cap s_2$ is contained in $\{t\}\times [0,1]$ and we know that $s_1 \cap s_2$ is a segment, since $\{t\}\times [0,1]$ is a $\Lambda$-tree and satisfies axiom (3). If $\varphi_1 = \varphi_2$ then we know that $s_1 \cap s_2 \cap (0,1]\times\{\varphi_1\}$ is a segment of the $\Lambda$-tree $(0,1]\times\{\varphi_1\}$. By the description of the segments in Lemma \ref{lem:2ax1} $s_1 \cap s_2$ is a segment.
\end{proof}

\begin{lemma}
	The $\Lambda$-metric space $(X_2,d)$ does not satisfy axiom (2).
\end{lemma}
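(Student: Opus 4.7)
The plan is to exhibit three points of $X_2$ and two segments meeting at a single common endpoint whose union cannot be a segment. The idea is to exploit the ``inward-bending'' shape of geodesics described in Lemma~\ref{lem:2ax1}: if I instead build an ``outward-bending'' piecewise path, it should not be reparametrizable as a single segment.

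Concretely I would take $p_1:=(2,0)$, $p:=(2,1)$ and $p_2:=(1,1)$, and let $s$ and $s'$ be the unique segments from $p_1$ to $p$ and from $p$ to $p_2$, respectively. From Lemma~\ref{lem:2ax1}, $s$ is the ``outer arc'' $\{(2,\varphi):\varphi\in[0,1]\}$ of length $2$ and $s'$ is the radial ray $\{(t,1):t\in[1,2]\}$ of length $1$; it is immediate from the formulas that $s\cap s'=\{p\}$ and that $p$ is an endpoint of each.

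To show $s\cup s'$ is not a segment, I would use a diameter argument. A direct computation gives $d(p_1,(t,1))=(2-t)+t\cdot 1=2$ for every $t\in[1,2]$, so every point of $s'$ sits at distance exactly $2$ from $p_1$; in particular both $(p_1,p)$ and $(p_1,p_2)$ realise distance $2$. Meanwhile one checks $d(a,b)\le 2$ for all $a,b\in s\cup s'$ (the bounds $\operatorname{diam}(s)=2$ and $\operatorname{diam}(s')=1$ are clear from the metric, and for a mixed pair $a=(2,\varphi)\in s$, $b=(t,1)\in s'$ the metric gives $d(a,b)=(2-t)+t(1-\varphi)=2-t\varphi\le 2$). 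Thus $\operatorname{diam}(s\cup s')=2$ and is attained by (at least) two distinct pairs. In any segment isometric to a closed $\Lambda$-interval $[0,L]$, however, the diameter $L$ is attained only by the pair of endpoints, so $s\cup s'$ cannot be a segment.

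The main point to watch is that the computation $2-t\varphi$ for mixed pairs really is at most $2$ for every admissible $(t,\varphi)$, since this is what produces multiple maximising pairs; everything else is a matter of unpacking Lemma~\ref{lem:2ax1}. An alternative route, should the diameter argument become cumbersome, would be to invoke uniqueness of geodesics directly: by Lemma~\ref{lem:2ax1}, the unique segment from $p_1$ to $p_2$ is the ``inner L-shape'' circling at radius $1$ from $(1,1)$ to $(1,0)$ and then going radially out to $(2,0)$, which is plainly not equal to the ``outer L-shape'' $s\cup s'$.
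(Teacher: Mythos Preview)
Your proof is correct. Both you and the paper exhibit two specific segments meeting in a single common endpoint whose union is not a segment, but the choices and the verification differ. The paper takes $s_1=[(1,0),(2,0)]$ and $s_2=[(1,0),(2,1)]$ meeting at $(1,0)$ and disposes of the union in one line by appealing to the explicit form of segments from Lemma~\ref{lem:2ax1} --- essentially your ``alternative route''. Your primary argument instead uses the outer arc $\{2\}\times[0,1]$ and the radial ray $[1,2]\times\{1\}$ meeting at $(2,1)$, and then gives a self-contained diameter computation: since $d((2,0),(t,1))=2$ for every $t\in[1,2]$, the diameter of $s\cup s'$ is realised by more than one pair of points, which is impossible in any isometric copy of a closed $\Lambda$-interval. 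This buys you independence from the precise parametrisation in Lemma~\ref{lem:2ax1} at the cost of a few extra lines; the paper's version is shorter but leaves the reader to check why the union fails to match the allowed shapes.
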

\begin{proof}
	We consider the segments $s_1 = [(1,0),(2,0)]$ and $s_2 = [(1,0),(2,1)]$. The two segments intersect exactly in one point $(1,0)$, but the union $s_1 \cup s_2$ is not a segment by the description of segments in Lemma \ref{lem:2ax1}.
\end{proof}

\subsection{Construction of $X_3$}

We now give an example for the case $\Lambda \neq 2\Lambda$. Let $a\in \Lambda$ not divisible by $2$. Consider the space $X_3=[0,3a]/\!\!\sim$, where we identify $0\sim 3a$. The $\Lambda$-metric is given by
$$
d(p,q) = \min\left\{ \left|q-p+k\cdot 3a \right| \colon k \in \zz\right\}, \quad \quad \text{for } p,q \in X_3.
$$
Intuitively we have turned the interval $[0,3a]$ into a circle with the shortest path metric. From the definition it follows that $(X_3,d)$ is a $\Lambda$-metric space and that it satisfies (1) and (3). Axiom (2) is however not satisfied, because the union $[0,a] \cup [a,2a]$ of two segments is not again a segment, since $d(0,2a) = a \neq 2a$. The actual segment connecting $0$ to $2a$ is $[2a, 3a]$.

\vspace{0cm}

%\tableofcontents

%\newpage

\end{document}